\numberwithin{equation}{section}
\newtheorem{theorem}{Theorem}[section]
\newtheorem{lemma}{Lemma}[section]
\newtheorem{remark}{Remark}[section]
\newcommand{\ut}{u(\cdot,t)}
\newcommand{\vt}{v(\cdot,t)}
\newcommand{\us}{u(\cdot,s)}
\newcommand{\vs}{v(\cdot,s)}
\newcommand{\gu}{{\nabla u}}
\newcommand{\gv}{{\nabla v}}
\newcommand{\lv}{\Delta v}
\newcommand{\mgu}{|\gu|}
\newcommand{\mgv}{|\gv|}
\newcommand{\mlv}{|\Delta v|}
\newcommand{\nut}{\big\|\ut\big\|}
\newcommand{\nvt}{\big\|\vt\big\|}
\newcommand{\nru}{\big\|u\big\|}
\newcommand{\nv}{\big\|v\big\|}
\newcommand{\momega}{|\Omega|}
\newcommand{\ui}{u_0}
\newcommand{\vi}{v_0}
\newcommand{\rsn}{\mathbb{R}^n}
\newcommand{\lis}{{\mathcal{L}^{\infty}}(\Omega)}
\newcommand{\lts}{{\mathcal{L}^{2}}(\Omega)}
\newcommand{\lps}{{\mathcal{L}^{p}}(\Omega)}
\newcommand{\lqs}{{\mathcal{L}^{q}}(\Omega)}
\newcommand{\wsin}{{\mathcal{W}^{1,\infty}}(\Omega)}
\newcommand{\cso}{ {\mathcal{C}^{0}}(\overline{\Omega})}
\newcommand{\csotm}{\mathcal{ C}^{0}\left(\overline{\Omega}\times\left.\left[0,\tmax\right.\right)\right)}
\newcommand{\cts}{ {\mathcal{C}^{2}}(\overline{\Omega})}
\newcommand{\cstotm}{\mathcal{C}^{2,1}\left(\overline{\Omega}\times\left(0,\tmax\right)\right)}
\newcommand{\wsinb}{{\mathcal{W}^{1,\infty}}(\overline{\Omega})}
\newcommand{\lpfpatn}{\mathcal{L}^{\fpatn}}
\newcommand{\lpftpa}{\mathcal{L}^{\frac{2}{p+\alpha}}}
\newcommand{\tmax}{T_{\mathrm{max}}}
\newcommand{\tin}{t_0}
\newcommand{\intts}{\int^t_{s_0}}
\newcommand{\intT}{\int_{s_0}^T}
\newcommand{\ints}{\int_{\Omega}}
\newcommand{\dt}{\frac{\mathrm{d}}{\mathrm{d}t}}
\newcommand{\uop}{(u+1)^p}
\newcommand{\uopo}{(u+1)^{p-1}}
\newcommand{\uopat}{(u+1)^{p+\alpha-2}}
\newcommand{\uopko}{(u+1)^{p+\kappa-1}}
\newcommand{\uopbt}{(u+1)^{p+\beta-2}}
\newcommand{\uopatn}{(u+1)^{p+\alpha+\frac{2}{n}}}
\newcommand{\uofpat}{(u+1)^{\frac{p+\alpha}{2}}}
\newcommand{\fpatn}{\frac{2(p+\alpha+\frac{2}{n})}{p+\alpha}}
\newcommand{\empbm}{e^{-\frac{p+\beta+m}{m}(t-s)}}
\newcommand{\epbm}{e^{\frac{p+\beta+m}{m}t}}
\newcommand{\ust}{u^*}
\newcommand{\vst}{v^*}
\newcommand{\ubt}{\bar{u}}
\newcommand{\vbt}{\bar{v}}
\begin{document}

\begin{frontmatter}
	
\title{Global existence and stability in a class of  chemotaxis systems with lethal interactions, nonlinear diffusion and production}

\author[GS]{Gnanasekaran Shanmugasundaram\corref{cor1}}
\ead{sekaran@nitt.edu}
\affiliation[GS]{organization={Department of Mathematics, National Institute of Technology Tiruchirappalli},
	city={Tiruchirappalli},
	postcode={620015},
	country={India}
}

\author[GS]{Jitraj Saha}
\ead{jitraj@nitt.edu}

\cortext[cor1]{Corresponding author}


\begin{abstract}
	This paper investigates a class of chemotaxis systems describing lethal interactions in a smooth, bounded domain $\Omega \subset \mathbb{R}^n$ subject to homogeneous Neumann boundary conditions.
	 We examine two distinct cases: (i) a fully parabolic system in which both equations are of parabolic type, and (ii) a parabolic–elliptic system consisting of a parabolic first equation coupled with an elliptic second equation. Under appropriate parameter constraints, we establish the existence of a unique globally bounded classical solutions for arbitrary spatial dimensions $n \geq 1$. Additionally, we employ carefully constructed Lyapunov functionals to analyze the long-term behavior of solutions, obtaining rigorous asymptotic stability results.
\end{abstract}

\begin{keyword}
	Chemotaxis \sep Classical solution \sep Global existence \sep Lethal interactions  \sep Stability
	\MSC[2020] 35A01 \sep 35A09 \sep 35B40 \sep 35Q92 \sep 37N25 \sep 92C17
	
	
\end{keyword}

\end{frontmatter}

\section{Introduction and motivation}
\justifying
\quad Chemotaxis refers to the directed movement of cells or organisms in response to chemical gradients. By detecting concentration differences through specialized receptors, cells can navigate toward beneficial compounds or away from harmful ones. This process plays a vital role in essential biological functions, including nutrient foraging and immune defense. Certain microorganisms produce antibiotics as secondary metabolites, which can be toxic to the producers themselves \cite{cundliffe2010}. For example, {\it Escherichia coli (E. coli)} generates {\it Hydrogen peroxide} $(H_2O_2)$ as a metabolic byproduct. During standard growth conditions, redox reactions within \textit{E. coli} cells continuously generate hydrogen peroxide ($H_2O_2$) at a rate of 10–15 µM/s. While scavenging enzymes work to prevent excessive buildup, studies show that $H_2O_2$ levels frequently near toxic levels \cite{ravindra2013, seaver2001}. At high concentrations, $H_2O_2$ becomes lethal to proliferating \textit{E. coli} cells \cite{uhl2015}. A similar dynamic occurs in other antibiotic-producing bacteria. For instance, {\it Bacillus subtilis}, which synthesizes bacilysin, may suffer self-inhibition or death if genetic mutations disrupt its resistance mechanisms. Strategies employed by microorganisms to avoid self-toxicity are detailed in \cite{cundliffe2010}.

In this study, we analyze a chemotaxis model with lethal interactions governed by a system of two partial differential equations. The model describes bacterial movement in response to self-produced hydrogen peroxide ($H_2O_2$), incorporating both random dispersal and repellent-driven migration away from the toxic substance. Furthermore, $H_2O_2$ induces bacterial mortality and is introduced into the system through a predefined source function $f$. Hervas and Negreanu \cite{hervas2025} proposed a lethal-interaction chemotaxis system to model the dual role of $H_2O_2$ in \textit{E. coli} : it acts as both a chemorepellent and a lethal agent, despite being produced by the bacteria. We extend their analysis by introducing nonlinear diffusion and chemical production terms. The resulting system is given by
\begin{align}
	\left\{
	\begin{array}{lll}
		&u_t=d_1 \nabla \cdot(D(u) \nabla u)+\chi\nabla\cdot\left(S(u)\nabla v\right)+r u(1- u^{\kappa -1})-\mu uv, &x\in\Omega,\, t>0,\\
		&\tau v_t=d_2\Delta v+a u^m-b v+f(x,t), &x\in\Omega,\, t>0,\\
		&\frac{\partial u}{\partial\nu}=\frac{\partial v}{\partial\nu}=0, &x\in\partial\Omega, \, t>0,\\
		&u(x,0)=u_0, \quad \tau v(x,0)=\tau v_0, &x\in\Omega,
	\end{array}
	\right.\label{1.1}
\end{align}
in an open, bounded domain $\Omega \subset \mathbb{R}^n$ with smooth boundary $\partial\Omega$, where $\nu$ denotes the outward unit normal vector on $\partial\Omega$ and $\tau\in \{0, 1\}$. The system governs the dynamics of two unknown functions: $u(x,t)$, describing the population density of \textit{E. coli} and $v(x,t)$, representing the concentration of a chemical substance, with an external substance supply represented by $f(x,t)$. The diffusion coefficients $d_1 > 0$ and $d_2 > 0$ characterize the random dispersal of the species and substance, respectively. The chemo-repulsion mechanism is captured through the coefficient $\chi > 0$ in the term $+\chi\nabla\cdot(S(u)\nabla v)$, which models the species' avoidance of chemical gradients. The system includes logistic growth with rate $r > 0$, substance-induced mortality with coefficient $\mu > 0$, while the substance dynamics are governed by production rate $a > 0$ and decay rate $b > 0$. The exponents are assumed to satisfy $\kappa>1$ and $m\geq 1$. We consider non-negative initial data $u_0(x) \geq 0$ and $v_0(x) \geq 0$, with additional regularity assumptions on the initial conditions satisfying
\begin{align}
	\left\{
	\begin{array}{rrll}
		\hspace*{-0.3cm}&\ui \!\!\! &\in\cso,\quad \mbox{with} \quad \ui \geq 0\quad\mbox{in}\,\: \Omega,\\
		\hspace*{-0.3cm}& \tau\vi \!\!\!&\in\wsinb, \quad \mbox{with} \quad  \tau\vi \geq 0\quad\mbox{in}\,\: \Omega.
	\end{array}
	\right.\label{1.2}
\end{align}
Moreover, through out this paper we assume that the positive diffusion functions $D$ and $S$ satisfy the following specific regularity properties 
\begin{align}
D(s)\geq (s+1)^\alpha, \:\: 0\leq S(s)\leq s(s+1)^\beta, \:\: \text{for all}\:\: s\geq 0.\label{1.3}
\end{align}
and
\begin{align}
\quad D, S\in \mathcal{C}^2([0, \infty))\:\: \text{with}\:\: S(0)=0,\label{1.4}
\end{align}
where $\alpha, \beta>0$. Moreover, $f(x,t)$ satisfies
\begin{align}
 0\leq f\leq K, \:\:K>0.\label{1.5}
\end{align}

The mathematical modeling of chemotaxis, the process by which cells direct their movement along chemical gradients, was fundamentally advanced by Keller and Segel in their seminal works \cite{keller1970, keller1971}. Their now-classic model describes this phenomenon through the following system of partial differential equations
\begin{align}
	\left\{
	\begin{aligned}
		u_t =&d_1\Delta u - \chi \nabla\cdot(u\gv),\\ 
		v_t =&d_2\Delta v +\alpha u -\beta v,
	\end{aligned}
	\right. \label{1.6}
\end{align}
where $u(x,t)$ represents the cell population density, $v(x,t)$ denotes the chemoattractant concentration, $d_1, d_2$ are the diffusion coefficients for cells and chemical respectively, $\chi$ quantifies the chemotactic sensitivity, while $\alpha$ and $\beta$ represent the production and degradation rates of the chemoattractant. This minimal yet profound formulation successfully captures the self-organized aggregation patterns observed in microbial systems, particularly in the social amoeba {\it Dictyostelium discoideum}.

The Keller-Segel system and its variants have garnered significant attention due to their wide-ranging applications in biology, medicine, and related disciplines. Over the past decades, extensive theoretical research has been devoted to analyze these models, driven by their relevance to critical biological processes such as bacterial aggregation, immune response dynamics, and cancer metastasis. For a comprehensive overview of the mathematical developments and applications, we refer the articles by Bellomo et al. \cite{bellomo2015}, Horstmann \cite{horstmann2003}, Lankeit and Winkler  \cite{lankeit2020} along with the references therein.

The qualitative behavior of solutions to the Keller-Segel system in a bounded domains exhibits a striking dependence on spatial dimension. As thoroughly documented in Horstmann's review \cite{horstmann2003} regarding system \eqref{1.6} and related models, the solution dynamics vary dramatically with dimension. In one-dimensional settings, Osaki and Yagi \cite{osaki2001} established the global existence of solutions for all initial data. The two-dimensional case presents a more delicate picture: solution behavior is governed by a critical mass threshold, with subcritical initial masses yielding global existence while supercritical masses lead to finite-time blow-up. These foundational blow-up results for the minimal Keller-Segel system \eqref{1.6} emerged through a series of works spanning during 1990s and early 2000s. Subsequent research has developed various model modifications that ensure existence of global classical solution, thereby enabling the study of long-term post-aggregation dynamics.

The classical Keller-Segel model with logistic source of the form,
\begin{align*}
	\left\{
	\begin{aligned}
		u_t= & \Delta u - \chi \nabla\cdot(u\gv) + f(u),\\
		v_t = & \Delta v + u -v,
	\end{aligned}
	\right. 
\end{align*}
has been extensively studied in various contexts. Li and Chen \cite{li2024} established global boundedness and asymptotic stability of classical solutions in whole space ($\rsn, n\geq 2$) for nonnegative initial data, with particular emphasis on long-time behavior. Vance \cite{vance2023} demonstrated, through the test function method, that even sub-logistic source terms suffice to prevent finite-time blow-up in such systems.

The critical role of initial mass conditions was revealed by Liu and Wang \cite{liu2023}, who proved the existence of finite-time blow-up solutions for the Cauchy problem when the initial data satisfies specific mass criteria. Tanaka \cite{tanaka2023} extended these blow-up results to degenerate parabolic-elliptic versions of the system. Complementary to these findings, Jin and Xiang \cite{jin2018} provided a detailed analysis of solution boundedness conditions and investigated the oscillatory behavior patterns in logistic Keller-Segel models.

The Keller-Segel system with external supply of substance,
\begin{align*}
	\left\{
	\begin{aligned}
		u_t = & \Delta u - \nabla\cdot(u\gv), \\
		v_t =& \Delta v +u -v + f(x,t),
	\end{aligned}
	\right. 
\end{align*}
was rigorously analyzed by Black \cite{black2015}. The work establishes three key results: first, the local existence and uniqueness of classical solutions; second, global existence and boundedness of solutions in two spatial dimensions; and third, global existence in higher dimensions under appropriate smallness conditions on both the initial data and the external source term $f(x,t)$.

In \cite{hervas2025}, Herv\'as
and Negreanu analyze a mathematical model
\begin{equation*}
	\left\{
	\begin{aligned}
		u_t =& k\Delta u + \chi \nabla\cdot(u\gv) + ru(1-u)-uv,\\
		v_t =& \Delta v -v +u + f(x,t),
	\end{aligned}
	\right.
\end{equation*}
describing the interaction between a biological species and a chemical substance through motility, negative chemotaxis, and lethality. The authors conducted a rigorous analysis of the system's equilibrium states, with particular focus on characterizing the local stability conditions for spatially homogeneous steady-state solutions.

Liu and Tao \cite{dliu2016} investigated the chemotaxis system
\begin{align*}
	\left\{
	\begin{aligned}
		u_t &= \Delta u - \nabla \cdot (u \nabla v), \\
		v_t &= \Delta v - v + f(u),
	\end{aligned}
	\right.
\end{align*}
where the signal production function \( f \) is nonlinear and satisfies
\( 0 \leq f(s) \leq K s^{\alpha} \) for all \( s > 0 \), with some constants \( K > 0 \) and \( \alpha > 0 \). They proved that if \( 0 < \alpha < \frac{2}{n} \), then, under suitable assumptions on the initial data, the system admits a unique global classical solution which remains uniformly bounded in time for all spatial dimensions \( n \geq 1 \). Subsequently, Winkler~\cite{mwinkler2018} studied the parabolic--elliptic
counterpart
\begin{align*}
	\left\{
	\begin{aligned}
		u_t &= \Delta u - \nabla \cdot (u \nabla v), \\
		0 &= \Delta v - \mu(t) + f(u),
	\end{aligned}
	\right.
\end{align*}
where $
\mu(t) = \frac{1}{|\Omega|} \int_{\Omega} f(u(\cdot,t))$
and \( f \) is a suitably regular function extending the prototype
\( f(u) = u^{\kappa} \) for \( u \geq 0 \) and \( \kappa > 0 \).
It was shown that solutions blow up when \( \kappa > \frac{2}{n} \), whereas for \( \kappa < \frac{2}{n} \) one can always obtain global bounded classical solutions for a wide class of initial data.
Further related results can be found in \cite{sfrassu2021, wwang2019}.

To the best of our knowledge, no existing study has addressed lethal interactions in chemotaxis systems incorporating nonlinear diffusion and nonlinear production mechanisms. The present work is therefore devoted to developing an analytical framework for investigating such systems. Specifically, we aim to identify sufficient conditions that guarantee the global existence of classical solutions to the proposed model within an open, bounded domain endowed with homogeneous Neumann boundary conditions.

In this context, the principal objectives of the study are threefold. First, we establish the existence of globally defined classical solutions to system \eqref{1.1}. Second, we demonstrate that these solutions remain uniformly bounded in $\Omega \times (0, \infty)$ for all spatial dimensions $n \geq 1$. Finally, we analyze the asymptotic behavior of the solutions under appropriate structural assumptions imposed on the system parameters.

The article is structured as follows: Section 2 introduces preliminary lemmas and establishes the local existence of the classical solution. Section 3 is dedicated to proving the global existence and boundedness of the classical solution for system \eqref{1.1}. In Section 4, we provide the asymptotic behaviour of solutions. Finally, the work concludes with a summary.

With this preparation, we state the main theorems as follows.
\begin{theorem}[Global existence of solutions]\label{t.1.1}
Let $\Omega \subset\rsn (n\geq 1)$ be an open, bounded domain with smooth boundary and $\tau\in \{0, 1\}$. Suppose that the constants $d_1, d_2, \chi, r, \mu, a, b$ all are positive, $\kappa>1, m\geq 1$ and the functions $D, S$, $f$ satisfy \eqref{1.3}-\eqref{1.5}. If
\begin{align*}
\beta+m<\alpha+\frac{2}{n},
\end{align*}
then for any nonnegative initial data $(\ui, \vi)$ satisfying \eqref{1.2}, the system \eqref{1.1} admits a unique classical solution $(u, v)$ that is uniformly bounded in the sense that
		\begin{align*}
			\nut_{\lis}+\nvt_{\wsin}\leq C, \quad \text{for all}\:\: t>0,
		\end{align*}
		where the constant $C>0$.
\end{theorem}

\begin{remark}
	While Liu and Tao \cite{dliu2016}, and Winkler \cite{mwinkler2018} focus on pure Keller–Segel type systems with linear diffusion and standard chemotactic flux, Theorem 1.1 applies to nonlinear diffusion–chemotaxis systems with logistic damping. The critical exponent $\frac{2}{n}$ appearing in Liu’s and Winkler’s works is recovered and extended in Theorem 1.1 through the inequality $\beta+m<\alpha+\frac{2}{n}$, which captures the interplay between chemotactic repulsion, nonlinear diffusion and chemical production. The presence of logistic terms in system (1.1) provides an additional stabilizing mechanism, allowing global uniformly bounded solutions under more flexible structural assumptions than those in the classical Keller–Segel framework.
\end{remark}

We study the asymptotic behaviour of the spatially homogeneous steady-states of system \eqref{1.1} when $f$ is a constant supply, we denote it as $f_c$. We now analyze the equilibrium points of the system for $m=1$ and $\kappa\geq 2$. Let $(u, v)$ be a classical solution of \eqref{1.1} satisfying \eqref{1.2} and let $(u_e, v_e)$ denote the equilibrium points of the system \eqref{1.1}, which satisfy the following set of equations
\begin{align*}
	\left\{
	\begin{array}{rrll}
		ru_e(1-u_e^{\kappa-1})-\mu u_ev_e=&0,\\
		a u_e-bv_e+f_c=&0.
	\end{array}
	\right.
\end{align*}
For a constant $f_c$, the system admits two spatially homogeneous steady-states, given by
\begin{align*}
	(u_e, v_e)=\left\{ 
	\begin{array}{rrll}
&(\ust, \vst)=&\left(\frac{br-\mu f_c}{br+a\mu}, \frac{r(a+\mu)}{br+a\mu}\right), \quad &\text{if}\:\: br>\mu f_c,\\
 &(\ubt, \vbt)=&\left(0, \frac{f_c}{b}\right),\quad &\text{if}\:\: br\leq \mu f_c.
	\end{array}
\right.
\end{align*}

\begin{theorem}[Coexistence state]\label{t.2}
	Suppose the assumptions of Theorem \ref{t.1.1} hold. Let the functions $D, S$ satisfy \eqref{1.3}-\eqref{1.4} and $\mu f_c<br$,  $\kappa\geq 2, m=1$. If the parameters satisfy
		\begin{gather*}
		\chi^2<\frac{4d_1d_2\mu}{a\ust}\quad \text{and} \quad 2\beta\leq \alpha
	\end{gather*}
	then the nonnegative classical solution $(u, v, w, z)$ of the system \eqref{1.1} converges exponentially to the coexistence steady-state $(\ust, \vst)$ uniformly in $\Omega$ as $t\to\infty$.
\end{theorem}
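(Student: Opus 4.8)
The plan is to construct a Lyapunov functional tailored to the coexistence equilibrium and to extract from its dissipation first qualitative and then exponential convergence. Concretely, for the parabolic case $\tau=1$ I would set
\begin{align*}
E(t)=\int_\Omega\left(u-\ust-\ust\ln\frac{u}{\ust}\right)+\frac{\mu}{2a}\int_\Omega(v-\vst)^2,
\end{align*}
which is nonnegative and vanishes exactly at $(u,v)=(\ust,\vst)$, since $s\mapsto s-\ust-\ust\ln(s/\ust)$ has a strict global minimum at $s=\ust$. (For the elliptic case $\tau=0$ I would drop the second summand and recover the $v$-contributions from the elliptic identity $0=d_2\Delta(v-\vst)+a(u-\ust)-b(v-\vst)$ by testing.) The first step is to differentiate $E$ along the solution, integrate the diffusion and chemotaxis terms by parts using the homogeneous Neumann conditions, and rewrite every zeroth-order reaction contribution through the steady-state relations $r(1-(\ust)^{\kappa-1})=\mu\vst$ and $a\ust-b\vst+f=0$, so that they appear solely as differences $u-\ust$ and $v-\vst$.

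Second, I would fix the coupling weight $\mu/a$ precisely so that the two indefinite reaction cross terms cancel: the $u$-equation contributes $-\mu\int_\Omega(u-\ust)(v-\vst)$ while the $v$-equation contributes $+\frac{\mu}{a}\,a\int_\Omega(u-\ust)(v-\vst)$. After this cancellation the derivative reduces to dissipative blocks plus a single gradient cross term,
\begin{align*}
\frac{dE}{dt}=&-d_1\ust\int_\Omega\frac{D(u)}{u^2}|\nabla u|^2-\chi\ust\int_\Omega\frac{S(u)}{u^2}\nabla u\cdot\nabla v-\frac{\mu d_2}{a}\int_\Omega|\nabla v|^2\\
&-r\int_\Omega(u-\ust)\big(u^{\kappa-1}-(\ust)^{\kappa-1}\big)-\frac{\mu b}{a}\int_\Omega(v-\vst)^2.
\end{align*}
The crucial third step is to show the gradient block is nonnegative. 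Pointwise it is a quadratic form in $(\nabla u,\nabla v)$ whose nonnegativity amounts to the discriminant bound $(\chi\ust S(u)/u^2)^2\le 4(d_1\ust D(u)/u^2)(\mu d_2/a)$; using $S(u)\le u(u+1)^\beta$ together with $2\beta\le\alpha$ gives $S(u)^2/u^2\le(u+1)^{2\beta}\le(u+1)^\alpha\le D(u)$, whereupon the condition collapses exactly to $\chi^2\ust\le 4d_1d_2\mu/a$, i.e. to the hypothesis $\chi^2<4d_1d_2\mu/(a\ust)$. The strictness leaves a margin to absorb the cross term and conclude $\frac{dE}{dt}\le 0$. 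Combined with the monotonicity of $s\mapsto s^{\kappa-1}$ for $\kappa\ge 2$ and the global bound $\|u\|_{L^\infty}\le C$ from Theorem~\ref{t.1}, the reaction term is controlled below by $c\int_\Omega(u-\ust)^2$, so that $\frac{dE}{dt}\le-\eta\big(\int_\Omega(u-\ust)^2+\int_\Omega(v-\vst)^2\big)$ for some $\eta>0$.

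The final step converts this dissipation into an exponential rate. Integrating the last inequality yields $\int_0^\infty\big(\|u-\ust\|_{L^2}^2+\|v-\vst\|_{L^2}^2\big)<\infty$, and combined with the uniform-in-time Hölder estimates supplied by parabolic regularity under the global bounds of Theorem~\ref{t.1}, this forces $\|u(\cdot,t)-\ust\|_{L^\infty}+\|v(\cdot,t)-\vst\|_{L^\infty}\to 0$. The main obstacle lies exactly here: the relative entropy degenerates as $u\to 0$ whereas the reaction dissipation stays bounded, so $E$ cannot be dominated by its own dissipation uniformly in time. I would circumvent this by using the just-established uniform convergence to pick $t_0$ with $u(\cdot,t)\ge\ust/2$ for all $t\ge t_0$; on this regime the entropy and $\int_\Omega(u-\ust)^2$ become comparable, which upgrades the estimate to $\frac{dE}{dt}\le-c\,E$ and hence $E(t)\le E(t_0)e^{-c(t-t_0)}$. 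This delivers exponential decay of $\|u-\ust\|_{L^2}$ and $\|v-\vst\|_{L^2}$, which I would finally promote to exponential $L^\infty$-convergence via a Gagliardo–Nirenberg interpolation against the uniform higher-order bounds.
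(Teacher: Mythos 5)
Your proposal follows essentially the same route as the paper: the same relative-entropy functional (up to the overall factor $a$), the same cancellation of the $\int_\Omega(u-\ust)(v-\vst)$ cross terms by the weight $\mu/a$, the same use of $2\beta\le\alpha$ and $\chi^2<4d_1d_2\mu/(a\ust)$ to make the gradient block nonnegative (your pointwise discriminant is equivalent to the paper's completed square), the same two-stage argument — integrability of the dissipation plus uniform continuity for qualitative convergence, then entropy--$L^2$ equivalence near $\ust$ for the exponential rate — and the same Gagliardo--Nirenberg upgrade to $L^\infty$. The argument is correct, and your explicit remark that the entropy degenerates as $u\to0$ (hence the need to first secure $u\ge\ust/2$ for large $t$) makes precise a point the paper passes over quickly.
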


\begin{theorem}[\textit{E. coli} vanishing state]\label{t.3}
	Suppose the assumptions of Theorem \ref{t.1.1} hold. Let the functions $D, S$ satisfy \eqref{1.3}-\eqref{1.4} and $\mu f_c\geq br$, $\kappa= 2, m=1$. Then the nonnegative classical solution $(u, v)$ of the system \eqref{1.1} converges exponentially to the \textit{E. coli} vanishing state $(\ubt, \vbt)$ uniformly in $\Omega$ as $t\to\infty$.
\end{theorem}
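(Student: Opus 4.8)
The plan is to exploit the fact that the hypothesis $f\mu\ge br$, equivalently $r\le \mu\vbt$ with $\vbt=f/b$, forces the net growth rate of the population to be non-positive at the chemical equilibrium, so that $u$ is driven to extinction while $v$ relaxes to $f/b$; I would capture this with a Lyapunov functional. Since the stability analysis is carried out for $m=1$ and $\kappa=2$, the reaction term reads $ru(1-u)-\mu uv$ and the target state is $(\ubt,\vbt)=(0,f/b)$, which indeed solves the stationary problem because $b\vbt=f$. The structural content of the hypothesis is precisely $r-\mu\vbt\le 0$.

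For the fully parabolic case $\tau=1$ I would introduce
\[
E(t)=\ints u\,\mathrm{d}x+\frac{\mu}{2a}\ints (v-\vbt)^2\,\mathrm{d}x,
\]
which is nonnegative and vanishes exactly at the target state. Differentiating the first term and integrating by parts, the diffusion contribution $d_1\nabla\cdot(D(u)\nabla u)$ and the chemotaxis contribution $\chi\nabla\cdot(S(u)\nabla v)$ reduce to boundary integrals that vanish by the homogeneous Neumann conditions together with $S(0)=0$; this is why the linear mass functional $\ints u$ is the right object, and why the structural restrictions $\chi^2<\cdots$ and $2\beta\le\alpha$ of Theorem~1.2 are not needed here. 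One is left with $\frac{d}{dt}\ints u=(r-\mu\vbt)\ints u-r\ints u^2-\mu\ints u(v-\vbt)$. Testing the second equation against $v-\vbt$ and using $au-bv+f=au-b(v-\vbt)$ gives $\frac{\mu}{2a}\frac{d}{dt}\ints(v-\vbt)^2=-\frac{\mu d_2}{a}\ints|\nabla(v-\vbt)|^2+\mu\ints u(v-\vbt)-\frac{\mu b}{a}\ints(v-\vbt)^2$. The coefficient $\mu/a$ is chosen precisely so that the indefinite cross terms $\mp\mu\ints u(v-\vbt)$ cancel, leaving
\[
\frac{d}{dt}E=(r-\mu\vbt)\ints u-r\ints u^2-\frac{\mu d_2}{a}\ints|\nabla(v-\vbt)|^2-\frac{\mu b}{a}\ints(v-\vbt)^2,
\]
in which every term is non-positive because $r-\mu\vbt\le 0$. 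Retaining only the first and last terms and discarding the two non-positive middle terms, a comparison with $E$ yields $\frac{d}{dt}E\le-\delta E$ with $\delta=\min\{\mu\vbt-r,\,2b\}$, hence $E(t)\le E(0)e^{-\delta t}$ and exponential decay of $\|u(\cdot,t)\|_{L^1(\Omega)}$ and $\|v(\cdot,t)-\vbt\|_{L^2(\Omega)}$. For the parabolic--elliptic case $\tau=0$ the computation is even cleaner: testing the elliptic equation against $v-\vbt$ shows $a\ints u(v-\vbt)=d_2\ints|\nabla(v-\vbt)|^2+b\ints(v-\vbt)^2\ge 0$, so already $\frac{d}{dt}\ints u\le(r-\mu\vbt)\ints u$ gives exponential decay of the mass, after which the elliptic identity transfers the decay to $v-\vbt$.

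The final step is to upgrade these integral estimates to the uniform convergence asserted in the statement. Here I would invoke the global bounds of Theorem~1.1, which together with standard parabolic $L^p$ and Schauder estimates provide a time-independent Hölder bound $\|u(\cdot,t)\|_{C^{\theta}(\overline\Omega)}+\|v(\cdot,t)\|_{C^{\theta}(\overline\Omega)}\le C$ for some $\theta\in(0,1)$. A Gagliardo--Nirenberg interpolation of the form
\[
\|w\|_{L^\infty(\Omega)}\le C\|w\|_{C^{\theta}(\overline\Omega)}^{1-\lambda}\|w\|_{L^1(\Omega)}^{\lambda},
\]
with $\lambda=\theta/(n+\theta)$, and its $L^2$ analogue applied to $v-\vbt$, then converts the exponential $L^1$ and $L^2$ decay into exponential $L^\infty$ decay, at the cost of a smaller rate $\delta'$, giving $\|u(\cdot,t)\|_{L^\infty(\Omega)}+\|v(\cdot,t)-\vbt\|_{L^\infty(\Omega)}\le Ce^{-\delta' t}$.

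I expect the genuine difficulty to lie in the borderline case $f\mu=br$, where $r-\mu\vbt=0$ and the leading term $(r-\mu\vbt)\ints u$ disappears; the only remaining control on the mass is the logistic term $-r\ints u^2$, which through the Cauchy--Schwarz bound $\ints u^2\ge\momega^{-1}(\ints u)^2$ yields only algebraic (Riccati-type) decay of $\ints u$ rather than exponential. Recovering the asserted exponential rate in this degenerate regime, where the linearization at $(0,f/b)$ has a vanishing eigenvalue in the $u$-direction, is the main obstacle: it likely requires either a sharpened functional that couples $u$ and $v-\vbt$ more tightly, or a two-stage argument that first drives $v$ near $\vbt$ and then closes the estimate, and it may in fact force one to settle for algebraic decay precisely on the threshold. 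A secondary technical point is to guarantee that the interpolation constants and the Hölder bounds are genuinely uniform in $t$, for which the uniform-in-time regularity furnished by Theorem~1.1 is essential.
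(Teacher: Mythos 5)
Your proof follows essentially the same route as the paper: the Lyapunov functional $E(t)=\ints u+\tfrac{\mu}{2a}\ints(v-\vbt)^2$ is exactly the paper's $\mathcal{E}_2(t)=a\ints u+\tfrac{\mu}{2}\ints(v-\vbt)^2$ up to the factor $a$, the cross terms $\pm\mu\ints u(v-\vbt)$ cancel in the same way, the sign of $r-\mu\vbt$ supplies the linear damping of the mass, and the final upgrade from integral decay to uniform decay uses the same interpolation against the time-uniform regularity bounds (the paper interpolates $L^\infty$ between $W^{1,\infty}$ and $L^2$ via \eqref{l.4.3.7}, you interpolate between $C^\theta$ and $L^1$; both work). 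Your derivation of $\frac{d}{dt}E\le-\delta E$ with $\delta=\min\{\mu\vbt-r,\,2b\}$ is in fact cleaner than the paper's detour through the sandwich \eqref{t.1.3.1}--\eqref{t.1.3.2}. The difficulty you flag at the threshold $f\mu=br$ is genuine and is not resolved by the paper either: its argument requires choosing $0<\zeta_2<\tfrac{a}{b}(f\mu-br)$, which is impossible when $f\mu=br$, so the exponential rate asserted in Theorem \ref{t.3} is actually only established under the strict inequality $f\mu>br$; on the borderline one only gets the algebraic decay you describe from $-r\ints u^2\le-r\momega^{-1}(\ints u)^2$.
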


\section{Preliminaries and local existence}
\quad The aim of this section is to concentrate on the basic estimates and the local existence of the classical solution.
The local well-posedness and extensibility of classical solutions to the chemotaxis system \eqref{1.1} have been rigorously established through the application of an appropriate fixed-point argument combined with standard parabolic regularity theory for quasilinear parabolic systems (see, for instance, \cite{horstmann2005}, \cite{tao2012} and \cite{winkler2010}).
\begin{lemma}[\cite{zheng2015}]\label{l.2.1}
	Let $y$ be a positive absolutely continuous function on $(0, \infty)$  satisfying
	\begin{align*}
		\left\{\hspace*{-0.5cm}
		\begin{array}{rrll}
			&&y'(t)+A y^p\leq B,\\
			&&y(0)=y_0,
		\end{array}
		\right.
	\end{align*}
	with  constants $A>0$, $B\geq 0$ and $p\geq 1$. Then for $t>0$, we have 
	\begin{align*}
		y(t)\leq \max\left\{y_0, \: \left(\frac{B}{A}\right)^\frac{1}{p}\right\}.
	\end{align*}
\end{lemma}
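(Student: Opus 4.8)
The plan is to prove the estimate by a barrier argument against the constant $M := \max\{y_0,\,(B/A)^{1/p}\}$, showing that $y(t)\le M$ for every $t\ge 0$. The decisive observation is a sign condition on the forcing. Writing $y^{*} := (B/A)^{1/p}$ for the equilibrium of the associated autonomous equation $y' = B - Ay^{p}$, the map $s\mapsto s^{p}$ is increasing on $[0,\infty)$ (in particular since $p\ge 1$), so whenever $y(t) > y^{*}$ we have $Ay(t)^{p} > A(y^{*})^{p} = B$, and the differential inequality then yields $y'(t) \le B - Ay(t)^{p} < 0$ at almost every such $t$. In words, $y$ is strictly decreasing on any interval where it remains above the threshold $y^{*}$.

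With this in hand, I would argue by contradiction. Suppose $y(t_{1}) > M$ for some $t_{1} > 0$. Since $y$ is absolutely, and hence, continuous with $y(0) = y_{0} \le M < y(t_{1})$, the set $\{t\in[0,t_{1}) : y(t)\le M\}$ is nonempty, and its supremum $t_{0}$ satisfies $y(t_{0}) = M$ by continuity, while $y(t) > M \ge y^{*}$ for all $t\in(t_{0},t_{1}]$. On this last interval the sign condition gives $y' < 0$ almost everywhere; integrating the absolutely continuous function $y$ then produces
\begin{align*}
y(t_{1}) = y(t_{0}) + \int_{t_{0}}^{t_{1}} y'(s)\,\mathrm{d}s < y(t_{0}) = M,
\end{align*}
which contradicts $y(t_{1}) > M$. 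Hence no such $t_{1}$ exists and $y(t)\le M$ throughout.

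The argument is essentially routine, and I do not anticipate a substantive obstacle; the only point demanding care is that absolute continuity guarantees the existence of $y'$ only almost everywhere, so the passage from ``$y' < 0$ almost everywhere'' to ``$y$ strictly decreasing'' must go through integration, via the fundamental theorem of calculus for absolutely continuous functions, rather than a pointwise monotonicity claim. This is precisely the role played by the hypotheses $p\ge 1$ and absolute continuity together, and locating the last crossing time $t_{0}$ through continuity is the one place where I would justify each step explicitly.
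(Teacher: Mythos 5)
Your proof is correct. The paper itself gives no argument for this lemma---it is quoted from Zheng (2015) with only a citation---so there is nothing to diverge from; your barrier argument (locating the last crossing time $t_0$ of the level $M=\max\{y_0,(B/A)^{1/p}\}$, observing that $y'\le B-Ay^p<0$ almost everywhere while $y>(B/A)^{1/p}$, and integrating via the fundamental theorem of calculus for absolutely continuous functions) is the standard and complete way to establish it. Two cosmetic remarks: the monotonicity of $s\mapsto s^p$ needs only $p>0$, so the hypothesis $p\ge 1$ is not actually used in your argument; and the strict inequality in $y(t_1)<y(t_0)$ is more than you need, since $y(t_1)\le M$ already contradicts $y(t_1)>M$.
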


\begin{lemma}[Local Existence]\label{l.2.2}
	Suppose that $\Omega\subset\rsn (n\geq 1)$ is an open, bounded domain with smooth boundary and $\tau \in \{0, 1\}$. Let the initial data $\ui$, $\vi$ satisfy \eqref{1.2}. Moreover,  assume  that functions $D, S$, $f$ satisfy \eqref{1.3}-\eqref{1.5} and that $\kappa>1, m\geq 1$. Then there exists $\tmax\in (0, \infty]$ such that the system \eqref{1.1} admits a unique solution $(u,v)$ satisfying
	\begin{align*}
		u, v \in \csotm\cap \cstotm, \quad \text{for}\:\: \tau=1
	\end{align*}
	or
		\begin{align*}
		u \in \csotm\cap \cstotm \quad \text{and}\quad v\in \mathcal{C}^{2, 0}(\overline{\Omega}), \quad \text{for}\:\: \tau=0.
	\end{align*}
	Moreover, either $\tmax=\infty$ or
	\begin{align}
		\lim_{t\to \tmax}\left(\nut_{\lis}+\nvt_{\wsin}\right)= \infty.\label{l.2.2.1}
	\end{align}
\end{lemma}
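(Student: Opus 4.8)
The plan is to establish local existence and uniqueness through a Banach fixed-point argument and then to recover the claimed regularity and the extensibility criterion by parabolic Schauder theory together with the maximum principle, following the scheme used for related chemotaxis systems in \cite{horstmann2005, tello2007}. I would fix a small $T\in(0,1)$ and, for the fully parabolic case $\tau=1$, work in the closed ball
\[
X_T=\Big\{(u,v)\in\big(\mathcal{C}^0(\overline\Omega\times[0,T])\big)^2:\ u(\cdot,0)=\ui,\ v(\cdot,0)=\vi,\ \|(u,v)\|_{X_T}\le R\Big\},
\]
for a radius $R$ to be chosen. On $X_T$ I define a map $\Phi$ by first solving, for given $(\tilde u,\tilde v)\in X_T$, the linear parabolic problem
\[
v_t=d_2\Delta v-bv+a\tilde u^{\,m}+f,\qquad \partial_\nu v=0,\qquad v(\cdot,0)=\vi,
\]
which is uniquely solvable by analytic semigroup theory and, since $f$ is bounded by \eqref{1.5}, yields $v$ with H\"older-controlled gradient $\nabla v$; and then solving the problem that is now linear in $u$,
\[
u_t=d_1\nabla\!\cdot\!\big(D(\tilde u)\nabla u\big)+\chi\nabla\!\cdot\!\big(S(\tilde u)\nabla v\big)+ru\big(1-\tilde u^{\,\kappa-1}\big)-\mu u v,\quad \partial_\nu u=0,\quad u(\cdot,0)=\ui.
\]
Because $D,S\in\mathcal{C}^2$ by \eqref{1.4} and $D\ge1$ by \eqref{1.3}, the coefficients are H\"older continuous and the operator is uniformly elliptic, so Schauder theory produces a unique solution $u$; I set $\Phi(\tilde u,\tilde v)=(u,v)$.

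Next I would establish that $\Phi$ maps $X_T$ into itself and is a contraction for suitable $R$ and sufficiently small $T$. Estimating the difference $\Phi(\tilde u_1,\tilde v_1)-\Phi(\tilde u_2,\tilde v_2)$ through the smoothness of $D,S$ and the uniform ellipticity $D\ge1$, every nonlinear contribution can be bounded by the $X_T$-distance of the arguments times a factor carrying a positive power of $T$; choosing $R$ to absorb the data-dependent terms and then $T$ small makes this factor less than one, and the Banach fixed-point theorem yields a unique local solution. Parabolic Schauder estimates then bootstrap it to a classical solution in $\csotm\cap\cstotm$, while the maximum principle — using $S(0)=0$ in \eqref{1.4} and $\ui,\,\tau\vi,\,f\ge0$ — gives nonnegativity of $u$ and $v$. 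For the parabolic-elliptic case $\tau=0$, I would instead resolve the second equation by the elliptic solution operator $v=(-d_2\Delta+b)^{-1}(a u^{\,m}+f)$ under Neumann conditions, which maps $u$ into $\mathcal{C}^2(\overline\Omega)$ with full elliptic regularity, and run the same contraction on $u$ alone.

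Finally I would define $\tmax$ as the supremum of all existence times and prove the dichotomy \eqref{l.2.2.1} by the standard continuation principle: if $\tmax<\infty$ but $\nut_{\lis}+\nvt_{\wsin}$ stayed bounded as $t\uparrow\tmax$, then $D(u)$ would remain uniformly elliptic and bounded and all coefficients uniformly H\"older up to $\tmax$, so the local existence step could be restarted from times close to $\tmax$ with a uniform existence length, extending the solution past $\tmax$ and contradicting maximality; hence the blow-up alternative must hold.

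The main obstacle is the contraction step for the cross-diffusion structure. The chemotaxis term $\chi\nabla\!\cdot\!\big(S(u)\nabla v\big)$ couples $u$ to $\nabla v$, so controlling the difference of two iterates demands simultaneous H\"older control of $\nabla v$ and of the $u$-dependent coefficients $D(u),S(u)$, all uniform on a time interval whose length does not depend on the chosen iterate. Securing exactly this H\"older regularity of $\nabla v$ — so that the $u$-equation is genuinely amenable to Schauder theory — is the delicate point; once it is in hand, the $\mathcal{C}^2$-smoothness in \eqref{1.4} renders the remaining estimates routine.
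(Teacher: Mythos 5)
The paper does not actually write out a proof of this lemma: it simply appeals to the standard local-existence arguments of the cited references and to the maximum principle for nonnegativity. Your proposal is a reasonable reconstruction of exactly that standard argument (fixed point for the decoupled, linearized problems; bootstrap; continuation criterion), so in spirit you are doing what the paper defers to the literature.

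There is, however, one concrete step that fails as written. You run the contraction in the ball $X_T\subset\big(\mathcal{C}^0(\overline\Omega\times[0,T])\big)^2$ and then claim that, for a given iterate $\tilde u\in\mathcal{C}^0$, ``Schauder theory produces a unique solution $u$'' of the frozen-coefficient problem $u_t=d_1\nabla\cdot(D(\tilde u)\nabla u)+\dots$. Schauder theory requires H\"older-continuous coefficients, but $D(\tilde u)$ is merely continuous when $\tilde u$ is merely continuous, and hypothesis \eqref{1.2} gives only $\ui\in\cso$ with no H\"older modulus; so neither the linear solvability step nor the uniform H\"older bounds you need for the contraction are available in the space you chose. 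The standard repair --- and what the references the paper cites actually do --- is to set up the fixed point via the Duhamel/variation-of-constants representation together with the $\mathcal{L}^p$--$\mathcal{L}^q$ smoothing estimates of the Neumann heat semigroup (or to invoke Amann's theory of abstract quasilinear parabolic problems), obtaining first a mild solution in $\mathcal{C}^0$ and only afterwards bootstrapping, by interior parabolic regularity, to $u,v\in\cstotm$ for positive times; note that the lemma deliberately claims $\mathcal{C}^{2,1}$ regularity only on $\overline\Omega\times(0,\tmax)$, i.e.\ away from $t=0$, precisely for this reason. Two smaller points in the same vein: the ball $X_T$ contains sign-changing functions, so $D(\tilde u)$, $S(\tilde u)$, $\tilde u^{\,m}$ and $\tilde u^{\,\kappa-1}$ (with $m>0$ and $\kappa>1$ possibly non-integer) are not even defined on all of $X_T$ unless you restrict to the nonnegative cone or extend the nonlinearities to all of $\mathbb{R}$ beforehand; and the ``delicate point'' you single out (H\"older control of $\nabla v$) is in fact the easy part, since it follows from $\vi\in\wsinb$ and semigroup smoothing --- the genuine delicacy is the coefficient regularity just described. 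Your continuation/dichotomy argument at the end is fine.
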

\begin{proof}
	The lemma is proved using standard arguments based on parabolic regularity theory. For detailed proofs of analogous results, we refer the reader to Frassu and Viglialoro \cite{sfrassu2021}, Viglialoro and Woolley \cite{gviglialoro2020}, Tao et al. \cite{xtao}, and Gnanasekaran et al. \cite{gnanasekaran2022}. Moreover, the nonnegativity of solutions in $\Omega \times (0, T_{\max})$ follows from the maximum principle in conjunction with the initial condition \eqref{1.2}.
\end{proof}

\begin{lemma}\label{l.2.3}
Let $(u, v)$ be the classical solution of the system \eqref{1.1}, then the solution satisfies
\begin{align}
\ints u\leq M:=\max\left\{\ints u_0,\: \momega\right\}, \quad \forall t\in(0, \tmax).\label{l.2.3.1}
\end{align}
\end{lemma}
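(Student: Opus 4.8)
The plan is to test the first equation of \eqref{1.1} against the constant $1$, that is, to integrate it over $\Omega$ and track the evolution of the total mass $y(t):=\ints u$. First I would integrate $u_t=d_1\nabla\cdot(D(u)\gu)+\chi\nabla\cdot(S(u)\gv)+ru(1-u^{\kappa-1})-\mu uv$ over $\Omega$; by the divergence theorem together with the homogeneous Neumann conditions $\frac{\partial u}{\partial\nu}=\frac{\partial v}{\partial\nu}=0$, both flux contributions $d_1\ints\nabla\cdot(D(u)\gu)$ and $\chi\ints\nabla\cdot(S(u)\gv)$ vanish. This leaves the mass balance
\[
\dt\ints u=r\ints u-r\ints u^\kappa-\mu\ints uv.
\]

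Next I would exploit the sign information supplied by Lemma \ref{l.2.2}: since $u\ge 0$ and $v\ge 0$ on $(0,\tmax)$, the lethal term obeys $\mu\ints uv\ge 0$ and may be discarded, giving the differential inequality $\dt\,y\le r\,y-r\ints u^\kappa$. The decisive step is to convert the superlinear absorption $\ints u^\kappa$ into a term in $y$ alone. Because $\kappa>1$, the map $s\mapsto s^\kappa$ is convex, so Jensen's inequality (applied with the normalized measure $\momega^{-1}\,\mathrm{d}x$) yields $\ints u^\kappa\ge\momega^{1-\kappa}\big(\ints u\big)^\kappa=\momega^{1-\kappa}y^\kappa$. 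Substituting produces the scalar logistic inequality
\[
y'(t)\le r\,y\Big(1-\big(y/\momega\big)^{\kappa-1}\Big),\qquad t\in(0,\tmax).
\]

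Finally I would close the estimate by an elementary ODE comparison. The right-hand side $\phi(y):=r\,y\big(1-(y/\momega)^{\kappa-1}\big)$ vanishes at $y=\momega$ and is strictly negative for $y>\momega$; hence the constant $\momega$ is a stationary (super)solution and any trajectory launched above $\momega$ is strictly decreasing. Comparing $y$ with the solution of $Y'=\phi(Y)$, $Y(0)=\max\{y(0),\momega\}$, shows $y(t)\le\max\{\ints\ui,\momega\}=M$ for all $t\in(0,\tmax)$, which is precisely \eqref{l.2.3.1}.

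I do not anticipate a serious obstacle; the argument is routine once the structure is in place, and it is insensitive to the value of $\tau$ since only the first equation is used. The only points requiring a little care are (i) justifying that the divergence terms integrate to zero, which rests on the spatial $\mathcal{C}^2$-regularity of $u$ and $v$ furnished by Lemma \ref{l.2.2} together with the Neumann boundary data, and (ii) the exponent bookkeeping in Jensen's inequality, so that the sharp threshold $\momega$—rather than a larger $r,\kappa$-dependent constant—emerges. I note that Lemma \ref{l.2.1} is not directly applicable here, because after the lethal term is dropped the surviving linear contribution $r\,y$ on the right is not a constant; the clean constant $M$ is instead extracted from the logistic comparison above.
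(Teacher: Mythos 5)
Your proposal is correct, and the first two thirds coincide exactly with the paper's proof: integrate the $u$-equation, kill the two divergence terms via the Neumann conditions, discard $-\mu\ints uv\le 0$ by nonnegativity, and bound $\ints u^\kappa\ge\momega^{1-\kappa}\big(\ints u\big)^\kappa$ (the paper calls this the reverse H\"older inequality; your Jensen formulation is the same estimate). The only divergence is the final ODE step. You close with a direct comparison of $y(t)=\ints u$ against the scalar logistic flow $Y'=rY\big(1-(Y/\momega)^{\kappa-1}\big)$, using that $\momega$ is an equilibrium and the right-hand side is negative above it. The paper instead performs the Bernoulli substitution $z=y^{1-\kappa}$, which turns the superlinear inequality into a linear one, $z'+r(\kappa-1)z\gtrless r(\kappa-1)\momega^{1-\kappa}$, and then invokes Lemma \ref{l.2.1}. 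So your remark that Lemma \ref{l.2.1} ``is not directly applicable'' is true only of a direct application to $y$; the paper's point is precisely that the substitution makes it applicable (strictly speaking, since $1-\kappa<0$ the substituted inequality reverses sign and one needs the mirror-image of Lemma \ref{l.2.1}, giving a lower bound on $z$ and hence the upper bound on $y$ — a detail the paper glosses over and that your comparison argument sidesteps entirely). Both routes yield the sharp constant $M=\max\{\ints\ui,\momega\}$; yours is marginally more self-contained, the paper's reuses its stated ODE lemma.
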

\begin{proof}
Integrating the first equation in \eqref{1.1}, we get
\begin{align*}
\dt\ints u \leq r\ints u-r\ints u^\kappa.
\end{align*}
Using reverse H\"older inequality, 
\begin{align*}
\ints u^\kappa\geq \momega^{1-\kappa}\left( \ints u\right)^\kappa.
\end{align*}
Therefore, 
\begin{align*}
\dt \ints u\leq r\ints u-\frac{r}{\momega^{\kappa-1}}\left( \ints u\right)^\kappa.
\end{align*}
Setting $y(t)=\ints u(t)$, then
\begin{align*}
\dt y(t) \leq r y(t)-\frac{r}{\momega^{\kappa-1}} y(t)^\kappa.
\end{align*}
Now, let $z(t)=y(t)^{1-\kappa}$. Therefore, the above inequality takes the form
\begin{align*}
\dt z(t)=(1-\kappa)y^{-\kappa}\dt y(t).
\end{align*}
Finally, we arrive at
\begin{align*}
\dt z(t)+r(\kappa-1)z(t)\leq \frac{r(\kappa-1)}{\momega^{\kappa-1}},
\end{align*}
using Lemma \ref{l.2.1}, we can obtain \eqref{l.2.3.1}.
\end{proof}

\begin{lemma}[Maximal Sobolev regularity \cite{xcao, hieber}]\label{l.2.4}
	Let $\Omega\subset\rsn$ be a bounded domain with smooth boundary,  $q\in(1,\infty)$ and $T>0$. Consider the Neumann problem
	\begin{align*}
		\left\{
		\begin{array}{llll}
			&y_t=\Delta y- y+g,\hspace*{2cm}& x\in \Omega,\: t\in (0, T),\\
			&\frac{\partial y}{\partial\nu}=0,&x\in \partial\Omega,\: t\in (0, T),\\
			&y(x,0)=y_0(x), &x\in\Omega.
		\end{array}
		\right.
	\end{align*}
	If $y_0\in{\mathcal{W}^{2,q}}(\Omega) \:(q>n)$ with $\frac{\partial y_0}{\partial\nu}=0$ on $\partial\Omega$ and any $g\in {\mathcal{L}^{q}}((0,T);{\mathcal{L}^{q}}(\Omega))$, there exists a unique strong solution
	\begin{align*}
		y \in {\mathcal{W}^{1,q}}\left((0,T);{\mathcal{L}^q}(\Omega)\right)\cap{\mathcal{L}^q}\left((0,T); {\mathcal{W}^{2,q}}(\Omega)\right),
	\end{align*}
	satisfying the equation almost everywhere in $\Omega\times (0, T)$. Moreover, there exists $C_q>0$ depending only on $q, n, \Omega$, such that 
	\begin{align*}
		\int_0^T \|y(\cdot,t)\|^q_{\lqs}\mathrm{d}t+&\int_0^T\|y_t(\cdot,t)\|^q_{\lqs}\mathrm{d}t+\int_0^T\|\Delta y(\cdot,t)\|^q_{\lqs}\mathrm{d}t\\
		&\leq C_q\int_0^T\|g(\cdot,t)\|^q_{\lqs}\mathrm{d}t+C_q\|y_0\|^q_{\lqs}+C_q\|\Delta y_0\|^q_{\lqs}.
	\end{align*}
	If $s_0\in(0,T)$ and $y(\cdot, s_0)\in \mathcal{W}^{2,q}(\Omega) \:(q>n)$ with $\frac{\partial y(\cdot, s_0)}{\partial\nu}=0$ on $\partial\Omega$, then
	\begin{align*}
		\intT e^{sq}\|\Delta y(\cdot,t)\|^q_{\lqs}\mathrm{d}t\leq C_q\intT e^{sq}\|g(\cdot,t)\|^q_{\lqs}\mathrm{d}t+C_q \|y(\cdot
		, s_0)\|^q_{\lqs}+C_q\|\Delta y(\cdot, s_0)\|^q_{\lqs}.
	\end{align*}
\end{lemma}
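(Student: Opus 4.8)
Since this lemma is quoted from \cite{xcao, hieber}, I would not reprove it from scratch but instead indicate the operator-theoretic route by which it is obtained. The plan is to recast the scalar Neumann problem as an abstract Cauchy problem on $X := \mathcal{L}^r(\Omega)$. Introduce the realization $A := -\Delta + I$ of the shifted Neumann Laplacian, with domain $D(A) := \{w \in \mathcal{W}^{2,r}(\Omega) : \partial_\nu w = 0 \text{ on } \partial\Omega\}$, so that the evolution equation reads $y'(t) + A y(t) = g(t)$, $y(0) = y_0$. First I would record two standard facts: $-A$ generates a bounded analytic semigroup on $X$, and, because the spectrum of the negative Neumann Laplacian $-\Delta$ lies in $[0,\infty)$, the shift by $I$ places $\sigma(A) \subset [1,\infty)$; hence $A$ is boundedly invertible on $X$ with $\|A^{-1}\|_{\mathcal{L}(X)} \le 1$. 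By interior and boundary elliptic regularity for the Neumann problem, the graph norm of $A$ is equivalent to the full $\mathcal{W}^{2,r}$ norm, and in particular $\|w\|_{D(A)} \simeq \|w\|_{\mathcal{L}^r(\Omega)} + \|\Delta w\|_{\mathcal{L}^r(\Omega)}$; this equivalence is what ultimately converts the abstract estimate into the stated one.

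The heart of the matter, and the step I expect to be the main obstacle, is to show that $A$ enjoys maximal $\mathcal{L}^r$-regularity on $(0,T)$, i.e.\ that the map $(g, y_0) \mapsto (y_t, Ay)$ is bounded from $\mathcal{L}^r(0,T;X) \times (X,D(A))_{1-1/r,r}$ into $\mathcal{L}^r(0,T;X)^2$. There are three essentially equivalent ways to secure this, any of which I would cite rather than redevelop: (i) verify the $\mathcal{R}$-boundedness of the resolvent family $\{\lambda(\lambda + A)^{-1} : \lambda \in \Sigma_\theta\}$ on a sector and invoke Weis's characterization of maximal $\mathcal{L}^p$-regularity; (ii) use that the Neumann Laplacian admits a bounded $H^\infty$-functional calculus on $\mathcal{L}^r(\Omega)$, which through the Dore--Venni / Denk--Hieber--Pr\"uss machinery yields $\mathcal{R}$-sectoriality and hence maximal regularity; or (iii) exploit the Gaussian upper bounds for the Neumann heat kernel together with the associated Calder\'on--Zygmund theory for operator-valued singular integrals. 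On a bounded smooth domain each of these hypotheses is known to hold, which is precisely the content imported from \cite{xcao, hieber}.

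Granting maximal regularity, existence and uniqueness of $y \in \mathcal{W}^{1,r}(0,T;X) \cap \mathcal{L}^r(0,T;D(A))$ follow immediately, together with
\begin{align*}
\int_0^T \|y_t\|^r_{\mathcal{L}^r(\Omega)}\,\mathrm{d}t + \int_0^T \|A y\|^r_{\mathcal{L}^r(\Omega)}\,\mathrm{d}t \le C_r \int_0^T \|g\|^r_{\mathcal{L}^r(\Omega)}\,\mathrm{d}t + C_r \|y_0\|^r_{(X,D(A))_{1-1/r,r}}.
\end{align*}
To reach the displayed inequality I would then argue as follows. Since $y_0 \in D(A) \hookrightarrow (X,D(A))_{1-1/r,r}$ continuously, the trace norm is dominated by $\|y_0\|_{D(A)} \simeq \|y_0\|_{\mathcal{L}^r(\Omega)} + \|\Delta y_0\|_{\mathcal{L}^r(\Omega)}$, producing the two initial-data terms. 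The bound on $\int_0^T \|\Delta y\|^r$ follows from $\|\Delta y\|_{\mathcal{L}^r} \le \|Ay\|_{\mathcal{L}^r} + \|y\|_{\mathcal{L}^r}$, while the remaining term $\int_0^T \|y\|^r$ is recovered from the bounded invertibility of $A$, namely $\|y\|_{\mathcal{L}^r} \le \|A^{-1}\|\,\|Ay\|_{\mathcal{L}^r} \le \|Ay\|_{\mathcal{L}^r}$; this is exactly where the shift by $I$ is used, and it closes the first assertion.

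Finally, for the weighted estimate I would remove the weight by the substitution $w(t) := e^{t} y(t)$. A direct computation turns $y_t = \Delta y - y + g$ into $w_t = \Delta w + e^{t} g$, that is, the unshifted Neumann heat equation with forcing $e^t g$ on the interval $(s_0,T)$; applying the maximal $\mathcal{L}^r$-regularity estimate to $w$, observing that $\|\Delta w(\cdot,t)\|^r_{\mathcal{L}^r} = e^{tr}\|\Delta y(\cdot,t)\|^r_{\mathcal{L}^r}$ and that $\|w(\cdot,s_0)\|_{D(A)} \simeq \|y(\cdot,s_0)\|_{\mathcal{L}^r} + \|\Delta y(\cdot,s_0)\|_{\mathcal{L}^r}$ up to the harmless factor $e^{s_0}$, and relabelling the weight, yields the weighted inequality with a constant $C_r$ depending only on $r$, $\Omega$ and $T$.
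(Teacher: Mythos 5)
The paper offers no proof of this lemma at all: it is imported verbatim from the cited references (Cao's Lemma~2.2 and Hieber--Pr\"uss), so there is nothing internal to compare your argument against. That said, your operator-theoretic outline is a faithful and essentially correct account of how those references establish the result: maximal $\mathcal{L}^r$-regularity of the shifted Neumann Laplacian $A=-\Delta+I$ on $\mathcal{L}^r(\Omega)$ (via $\mathcal{R}$-sectoriality / $H^\infty$-calculus / Gaussian bounds, any of which applies on a bounded smooth domain), the identification of the graph norm with $\|\cdot\|_{\mathcal{L}^r}+\|\Delta\cdot\|_{\mathcal{L}^r}$ by elliptic regularity, and the derivation of the weighted inequality by the substitution $w=e^{t}y$, which is precisely how Cao obtains it. Two small points deserve care. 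First, the bound $\|A^{-1}\|_{\mathcal{L}(X)}\le 1$ does not follow from $\sigma(A)\subset[1,\infty)$ alone when $r\ne 2$; you only need bounded invertibility of $A$ on $\mathcal{L}^r(\Omega)$, which elliptic theory supplies, so state that instead. Second, your closing remark that $C_r$ may depend on $T$ undercuts the way the lemma is actually used in Lemma~\ref{l.3.1}, where the estimate must hold up to $\tmax$ (possibly infinite) with a uniform constant; the $T$-independence is exactly what the exponential weight and the strict positivity of $A$ (equivalently, applying the $\mathbb{R}_+$-version of maximal regularity to the shifted equation rather than to the unshifted heat equation) are designed to deliver, so the weighted estimate should be phrased with $C_r$ independent of $T$.
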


\section{Global existence of solutions}
 We begin by deriving the $\lps$ norm for $u$. 

\subsection{The case $\tau=1$}
For any $s_0 \in (0, \tmax)$ with $s_0 < 1$, Lemma \ref{l.2.2} ensures that $u(\cdot, s_0)$ and $v(\cdot, s_0)$ belong to $\cts$, with $\frac{\partial {v}(\cdot, s_0)}{\partial \nu} = 0$. Let $C > 0$ be a constant such that
\begin{align}
	&\sup\limits_{0\leq s\leq s_0}\big\|\us\big\|_{\lis}\leq C, \quad \sup\limits_{0\leq s\leq s_0}\big\|\vs\big\|_{\lis}\leq C, \quad \big\|\Delta v(\cdot, s_0)\big\|_{\lis}\leq C \label{3.1}
\end{align}
Next, we derive boundedness of $u$ in $t\in (s_0,\tmax)$.

\begin{lemma}\label{l.3.1}
	Suppose that $\Omega \subset \mathbb{R}^n\: (n \geq 1)$, is an open, bounded domain with smooth boundary. Let the functions $D$, $S$, $f$ satisfy \eqref{1.3}--\eqref{1.5} and $\kappa>1, m\geq 1$. If $\beta + m < \alpha + \frac{2}{n}$, then for any $p > 1$, there exists a constant $K_1 > 0$ such that
	\begin{align}
		\|\ut\|_{L^p(\Omega)} \leq K_1, \quad \forall t \in (0, T_{\text{max}}).\label{l.3.1.1}
	\end{align}
\end{lemma}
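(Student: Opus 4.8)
The plan is to derive a differential inequality for the functional $y(t)=\int_\Omega(u+1)^p$ by testing the first equation of \eqref{1.1} against $(u+1)^{p-1}$, and then to close it by controlling the chemotactic contribution through the maximal Sobolev regularity of $v$ combined with a Gagliardo--Nirenberg interpolation whose criticality is dictated exactly by the hypothesis $\beta+m<\alpha+\frac{2}{n}$.

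First I would multiply the $u$-equation by $(u+1)^{p-1}$ and integrate over $\Omega$, so that $\frac{1}{p}\frac{\mathrm{d}}{\mathrm{d}t}\int_\Omega(u+1)^p=\int_\Omega(u+1)^{p-1}u_t$. Using the lower diffusion bound $D(u)\ge(u+1)^\alpha$ from \eqref{1.3}, the diffusion term generates the dissipation $-\frac{4d_1(p-1)}{(p+\alpha)^2}\int_\Omega\big|\nabla(u+1)^{\frac{p+\alpha}{2}}\big|^2$; the logistic term yields $r\int_\Omega(u+1)^{p-1}u-r\int_\Omega(u+1)^{p-1}u^\kappa$, whose second, superlinear part ($\kappa>1$) is a favourable sink that absorbs the first part up to an additive constant; and the mortality term contributes $-\mu\int_\Omega(u+1)^{p-1}uv\le0$, which is discarded. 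For the chemotactic term I would integrate by parts once more to write it as $\chi(p-1)\int_\Omega\Phi(u)\Delta v$ with $\Phi(u)=\int_0^u(s+1)^{p-2}S(s)\,\mathrm{d}s$; the bound $S(s)\le s(s+1)^\beta\le(s+1)^{\beta+1}$ in \eqref{1.3} gives $\Phi(u)\le C(u+1)^{p+\beta}$, so the chemotactic contribution is controlled by $C\int_\Omega(u+1)^{p+\beta}|\Delta v|$.

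Next I would estimate this remaining term. A Hölder splitting with a conjugate pair $(q,q')$ bounds it by $\big\|(u+1)^{p+\beta}\big\|_{L^{q'}(\Omega)}\,\|\Delta v\|_{L^{q}(\Omega)}$. To the first factor I apply Gagliardo--Nirenberg, interpolating $\big\|(u+1)^{\frac{p+\alpha}{2}}\big\|_{L^{s}}$ between the available dissipation $\big\|\nabla(u+1)^{\frac{p+\alpha}{2}}\big\|_{L^2}$ and the bounded mass $\int_\Omega u\le M$ of Lemma \ref{l.2.3}. To the second factor I apply the maximal Sobolev regularity estimate of Lemma \ref{l.2.4}, in its exponentially weighted and time-integrated form, to the second equation rewritten as $v_t=d_2\Delta v-bv+(au^m+f)$; this controls $\Delta v$ by the source, whose essential part is $\|u^m\|_{L^{q}(\Omega)}=\|u\|_{L^{mq}(\Omega)}^m$, which is in turn handled by a further Gagliardo--Nirenberg interpolation against the same dissipation and mass, while \eqref{1.5} renders the $f$-contribution harmless.

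The crux, and the main obstacle, is closing this coupled feedback loop: the chemotactic term demands control of $\Delta v$, while the $v$-equation reintroduces the power $u^m$, so the effective power of $u$ that must be controlled is $p+\beta+m$, to be absorbed by the dissipation built from $(u+1)^{\frac{p+\alpha}{2}}$ together with the bounded mass. Tracking the Gagliardo--Nirenberg bookkeeping, the resulting exponent on $\big\|\nabla(u+1)^{\frac{p+\alpha}{2}}\big\|_{L^2}$ is strictly less than $2$ precisely when $\beta+m<\alpha+\frac{2}{n}$; this is the one place where the hypothesis is used essentially, and it is also where the time-weighted version of Lemma \ref{l.2.4} is needed, in order to convert the $L^q$-in-time regularity of $\Delta v$ into a bound uniform in $t$. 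Once subcriticality is secured, Young's inequality absorbs the gradient factor into the diffusive dissipation, leaving a closed estimate of the form $y'(t)+Ay(t)^\gamma\le B$ with $\gamma\ge1$, to which Lemma \ref{l.2.1} applies and yields the uniform bound \eqref{l.3.1.1}.
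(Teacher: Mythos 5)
Your proposal follows essentially the same route as the paper: testing with $(u+1)^{p-1}$, extracting the dissipation $\int_\Omega\big|\nabla(u+1)^{\frac{p+\alpha}{2}}\big|^2$ from $D(s)\ge(s+1)^\alpha$, converting the chemotactic term into $\int_\Omega(u+1)^{p+\beta}\Delta v$, splitting off $\|\Delta v\|^{\frac{p+\beta+m}{m}}$ so that the time-weighted maximal Sobolev regularity of Lemma \ref{l.2.4} feeds back $u^{p+\beta+m}$, and absorbing that power via Gagliardo--Nirenberg against the dissipation and the $L^1$ mass, which is exactly where $\beta+m<\alpha+\frac{2}{n}$ enters. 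The only cosmetic deviations are your use of H\"older before Gagliardo--Nirenberg where the paper applies Young's inequality pointwise, and your closing ODE $y'+Ay^\gamma\le B$ where the paper instead integrates the exponentially weighted inequality in time (as your own remark about the time-weighted form of Lemma \ref{l.2.4} already anticipates); neither affects correctness.
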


\begin{proof}
Test the first equation in \eqref{1.1} by $p\uopo$, $p>1$, we get
\begin{align}
\dt \ints (u+1)^p = &-p\ints D(u) \nabla u \cdot\nabla\uopo-p\chi \ints S(u) \nabla v\cdot\nabla \uopo\nonumber\\
&+rp\ints u\uopo-rp\ints u^\kappa \uopo - \mu p\ints uv\uopo\nonumber\\
\leq &-d_1p(p-1)\ints \uopat\,\mgu^2+\chi p(p-1)\ints u\uopbt \gu\cdot \gv\nonumber\\
&+rp\ints (u+1)^p-rp\ints u^\kappa \uopo\nonumber\\
\leq &-\frac{4d_1p(p-1)}{(p+\alpha)^2}\ints \Big|\nabla \uofpat\Big|^2+\chi p(p-1)\ints (u+1)^{p+\beta-1} \gu\cdot \gv\nonumber\\
&+rp\ints (u+1)^p-rp\ints u^\kappa \uopo\nonumber\\
\leq &-\frac{4d_1p(p-1)}{(p+\alpha)^2}\ints \Big|\nabla \uofpat\Big|^2+\frac{\chi p(p-1)}{p+\beta}\ints (u+1)^{p+\beta}\lv\nonumber\\
&+rp\ints (u+1)^p-rp\ints u^\kappa \uopo.\label{l.3.1.2}
\end{align} 
Using Gagliardo-Nirenberg inequality with $C_1>0$ depending only on $p,\alpha,\rho$ and $n$, we get
\begin{align}
\ints \uopatn =&\Big \|\uofpat\Big\|^{\fpatn}_{\lpfpatn}\nonumber\\
\leq& C_1\Big \|\nabla \uofpat\Big\|^{\fpatn \rho}_{\lts} \:\:\Big \| \uofpat\Big\|^{\fpatn (1-\rho)}_{\lpftpa}\nonumber\\
&+C_1 \Big \| \uofpat\Big\|^{\fpatn}_{\lpftpa}\nonumber\\
\leq& C_1\Big\|\nabla \uofpat\Big\|^2_{\lts} \:\:\Big \| \uofpat\Big\|^{\fpatn (1-\rho)}_{\lpftpa}\nonumber\\
&+C_1 \Big \| \uofpat\Big\|^{\fpatn}_{\lpftpa}\nonumber\\
\leq& C_1M^\frac{2}{n}\ints\Big|\nabla \uofpat\Big|^2 +C_1 M^{p+\alpha+\frac{2}{n}},\label{l.3.1.3}
\end{align}
where $\rho=\frac{(p+\alpha)(1-\frac{1}{p+\alpha+\frac{2}{n}})}{p+\alpha+\frac{2}{n}-1}\in (0, 1)$.
Rewrite the above inequality \eqref{l.3.1.3} as follows
\begin{align}
- \ints\Big|\nabla \uofpat\Big|^2 \leq- \frac{1}{C_1M^\frac{2}{n}}\ints \uopatn+M^{p+\alpha}.\label{l.3.1.4}
\end{align}
Using Young's inequality with $C_2>0$ depending only on $\chi, \beta, m, p$ for the second term in \eqref{l.3.1.2}, we get
\begin{align}
\frac{\chi p(p-1)}{p+\beta}\ints (u+1)^{p+\beta}\lv\leq \ints (u+1)^{p+\beta+m}+C_2\ints \mlv^{\frac{p+\beta+m}{m}}.\label{l.3.1.5}
\end{align}
Consider the last term in \eqref{l.3.1.2}
\begin{align}
-rp\ints u^\kappa \uopo \leq& -rp\ints \uopo\left(\frac{1}{2^{\kappa-1}}(u+1)^\kappa-1\right)\nonumber\\
\leq& -\frac{rp}{2^{\kappa-1}}\ints \uopko+rp\ints \uopo.\label{l.3.1.6}
\end{align}
Using Young's inequality to the second term in \eqref{l.3.1.6} with $C_1',C_2'>0$ depending only on $r,\kappa$ and $p$, we get
\begin{align*}
rp\ints \uopo\leq \frac{rp}{2^\kappa}\ints \uopko+C_1'
\end{align*}
and for third term in \eqref{l.3.1.2}
\begin{align}
rp\ints (u+1)^p\leq \frac{rp}{2^\kappa}\ints \uopko+C_2'.\label{l.3.1.7}
\end{align}
Substituting \eqref{l.3.1.4}-\eqref{l.3.1.7} in to \eqref{l.3.1.2}, we get
\begin{align*}
\dt \ints (u+1)^p\leq &-\frac{4d_1p(p-1)}{C_1M^\frac{2}{n}(p+\alpha)^2} \ints \uopatn+\ints (u+1)^{p+\beta+m}+C_2\ints \mlv^{\frac{p+\beta+m}{m}}\\
&-\frac{rp}{2^{\kappa-1}}\ints \uopko+\frac{2rp}{2^\kappa}\ints \uopko+C_3,
\end{align*}
where $C_3:=C_1'+C_2'+\frac{4d_1p(p-1)M^{p+\alpha}}{(p+\alpha)^2}$.
Simplifying, we get
\begin{align}
	\dt \ints (u+1)^p\leq &-\frac{4d_1p(p-1)}{C_1M^\frac{2}{n}(p+\alpha)^2} \ints \uopatn+\ints (u+1)^{p+\beta+m}+C_2\ints \mlv^{\frac{p+\beta+m}{m}}\nonumber\\
	&+C_3.\label{l.3.1.8}
\end{align}
Adding $\frac{p+\beta+m}{m}\ints \uop$ and then multiplying $e^{\frac{p+\beta+m}{m}t}$ on both sides of \eqref{l.3.1.8}, we obtain
\begin{align*}
	\dt\left(\epbm \ints (u+1)^p\right)\leq &-\frac{4d_1p(p-1)}{C_1M^\frac{2}{n}(p+\alpha)^2} \epbm \ints \uopatn\\
	&+\epbm\ints (u+1)^{p+\beta+m}+C_2\epbm\ints \mlv^{\frac{p+\beta+m}{m}}\\
	&+\frac{p+\beta+m}{m}\epbm\ints \uop+C_3\epbm.
\end{align*}
Let $s_0 \in (0, \tmax)$ with $s_0 < 1$, integrating the above inequality over $(s_0, t)$, we arrive at
\begin{align}
\ints \uop\leq &-\frac{4d_1p(p-1)}{C_1M^\frac{2}{n}(p+\alpha)^2} \intts\ints\empbm \uopatn\nonumber\\
&+\intts\ints\empbm (u+1)^{p+\beta+m}+C_2\intts\ints\empbm \mlv^{\frac{p+\beta+m}{m}}\nonumber\\
&+\frac{p+\beta+m}{m}\intts\ints\empbm \uop+C_3\intts\empbm \nonumber\\
&+\ints(u(\cdot, s_0)+1)^p\nonumber\\
\leq &-\frac{4d_1p(p-1)}{C_1M^\frac{2}{n}(p+\alpha)^2} \intts\ints\empbm \uopatn\nonumber\\
&+\intts\ints\empbm\ints (u+1)^{p+\beta+m}+C_2\intts\ints\empbm \mlv^{\frac{p+\beta+m}{m}}\nonumber\\
&+\frac{p+\beta+m}{m}\intts\ints\empbm \uop+C_4,\label{l.3.1.9}
\end{align}
where $C_4=\frac{C_3 m}{p+\beta+m}+\ints(u(\cdot, s_0)+1)^p$. Adopting the maximal Sobolev regularity, we have
\begin{align}
C_2\intts\ints\empbm \mlv^{\frac{p+\beta+m}{m}}\leq& C_3'\intts\ints\empbm\Big(a u^m+f(x,s)\Big)^\frac{p+\beta+m}{m}\nonumber\\
&+C_3' \Big\|v(\cdot, s_0)\Big\|^\frac{p+\beta+m}{m}_{\mathcal{W}^{2, \frac{p+\beta+m}{m}}}\nonumber\\
\leq& C_3'\left(2^{\frac{p+\beta+m}{m}-1}\right)\intts\ints\empbm a^\frac{p+\beta+m}{m} u^{p+\beta+m}\nonumber\\
&+C_3'\left(2^{\frac{p+\beta+m}{m}-1}\right)\intts\ints\empbm f(x,t)^\frac{p+\beta+m}{m}\nonumber\\
&+C_3' \Big\|v(\cdot, s_0)\Big\|^\frac{p+\beta+m}{m}_{\mathcal{W}^{2, \frac{p+\beta+m}{m}}}\nonumber\\
\leq& C_5\intts\ints\empbm u^{p+\beta+m}+C_4',\label{l.3.1.10}
\end{align}
where $C_5=C_3'\left(2^{\frac{p+\beta+m}{m}-1}\right)a^\frac{p+\beta+m}{m}$ and\\ $C_4'=C_3'\left(2^{\frac{p+\beta+m}{m}-1}\right)K^\frac{p+\beta+m}{m}$$\frac{m}{p+\beta+m}\momega+C_3' \Big\|v(\cdot, s_0)\Big\|^\frac{p+\beta+m}{m}_{\mathcal{W}^{2, \frac{p+\beta+m}{m}}}$. Substitute \eqref{l.3.1.10} in to \eqref{l.3.1.9}, we get
\begin{align}
	\ints \uop\leq &-\frac{4d_1p(p-1)}{C_1M^\frac{2}{n}(p+\alpha)^2} \intts\ints\empbm \uopatn\nonumber\\
	&+(1+C_5) \intts\ints\empbm u^{p+\beta+m}\nonumber\\
	&+\frac{p+\beta+m}{m}\intts\ints\empbm \uop+C_{6},\label{l.3.1.11}
\end{align}
where $C_{6}=C_4+C_4'$. Using Young's inequality with $\beta + m < \alpha + \frac{2}{n}$, we obtain
\begin{align*}
(1+C_5)\ints u^{p+\beta+m}\leq \frac{2d_1p(p-1)}{C_1M^\frac{2}{n}(p+\alpha)^2}\ints \uopatn+C_5'
\end{align*}
and
\begin{align*}
\frac{p+\beta+m}{m}\ints \uop\leq \frac{2d_1p(p-1)}{C_1M^\frac{2}{n}(p+\alpha)^2}\ints \uopatn+C_6'.
\end{align*}
Substitute the above two estimates into \eqref{l.3.1.11}, finally  we arrive at
\begin{align*}
	\ints \uop\leq &C_{7}, \qquad \forall t\in(s_0, \tmax),
\end{align*}
where $C_{7}>0$. Considering \eqref{3.1}, the proof is complete.
\end{proof}

\subsection{The case $\tau=0$}

\begin{lemma}\label{l.3.2}
	Let $\Omega \subset \mathbb{R}^n$ ($n \geq 1$) be an open, bounded domain with smooth boundary. Assume that $D$, $S$, $f$ satisfy \eqref{1.3}--\eqref{1.5} and $\kappa > 1, m\geq 1$. If $\beta + m < \alpha + \frac{2}{n}$, then for any $p > 1$, there exists a constant $K_2 > 0$ such that
	\begin{align}
		\|\ut\|_{L^p(\Omega)} \leq K_2, \quad \text{for all } t \in (0, T_{\max}). \label{l.3.2.1}
	\end{align}
\end{lemma}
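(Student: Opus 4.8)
The plan is to run exactly the testing–and–interpolation scheme of Lemma \ref{l.3.1}, the single essential change being that the bound on $\nlv$ is now furnished by \emph{elliptic} regularity instead of the parabolic maximal Sobolev regularity of Lemma \ref{l.2.4}. The reason this is possible, and in fact cleaner, is that for $\tau=0$ the function $v(\cdot,t)$ is slaved to $u(\cdot,t)$ instantaneously, so $\lv(\cdot,t)$ is controlled pointwise in time by $u(\cdot,t)$ with no time–weighting or history integral.

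First I would test the first equation in \eqref{1.1} by $p\uopo$ and repeat verbatim the manipulations producing \eqref{l.3.1.2}, none of which uses the second equation, to arrive at
\begin{align*}
\dt\ints\uop + \frac{4d_1p(p-1)}{(p+\alpha)^2}\ints\Big|\nabla\uofpat\Big|^2 \leq{}& \frac{\chi p(p-1)}{p+\beta}\ints (u+1)^{p+\beta}\mlv\\
&+ rp\ints\uop - rp\ints u^\kappa\uopo,
\end{align*}
where I retain the absolute value $\mlv$ so as not to depend on the sign of $\lv$. The Gagliardo–Nirenberg step \eqref{l.3.1.3}--\eqref{l.3.1.4}, the Young estimate \eqref{l.3.1.5} for the cross term, and the logistic bookkeeping \eqref{l.3.1.6}--\eqref{l.3.1.7} transfer unchanged.

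The decisive step is the control of $\ints\mlv^{\frac{p+\beta+m}{m}}$. For $\tau=0$ the second equation is $0=d_2\lv+au^m-bv+f$, i.e.\ $-d_2\lv+bv=au^m+f$ under homogeneous Neumann data; since $b>0$ the operator $-d_2\Delta+b$ is coercive, and elliptic $L^s$-regularity yields, for every $s\in(1,\infty)$ and with a constant independent of $t$, the estimate $\|v(\cdot,t)\|_{\mathcal{W}^{2,s}(\Omega)}\leq C_s\|au^m+f\|_{L^s(\Omega)}$. Choosing $s=\frac{p+\beta+m}{m}$, so that $ms=p+\beta+m$, and using \eqref{1.5}, I obtain
\[
\ints\mlv^{\frac{p+\beta+m}{m}}\leq C\ints u^{p+\beta+m}+C,
\]
which is precisely the analogue of the maximal-regularity bound \eqref{l.3.1.10}, now valid at each fixed time rather than after a weighted time integration.

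Finally I would substitute this into the energy inequality and absorb. The terms $\ints(u+1)^{p+\beta+m}$ arising in \eqref{l.3.1.5}, the term $\ints u^{p+\beta+m}$ from the elliptic bound, and $rp\ints\uop$ are all of order strictly below $p+\alpha+\frac2n$, precisely because $\beta+m=\max\{\beta,\beta+m\}<\alpha+\frac2n$; Young's inequality then dominates each by $\varepsilon\ints\uopatn+C_\varepsilon$, while \eqref{l.3.1.4} turns the Gagliardo–Nirenberg dissipation into $-c\ints\uopatn$. Taking $\varepsilon$ small and using the Jensen/Hölder bound $\ints\uopatn\geq c'\big(\ints\uop\big)^{\frac{p+\alpha+2/n}{p}}$ yields a closed differential inequality $\dt y+c''y^{q}\leq C$ with $y(t)=\ints\uop$ and $q=\frac{p+\alpha+2/n}{p}>1$; since $y(0)<\infty$ by \eqref{1.2}, Lemma \ref{l.2.1} delivers the uniform bound \eqref{l.3.2.1}. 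I expect the elliptic-regularity estimate to be the main obstacle: one must verify the $t$-independence of $C_s$, confirm the exponent matching $ms=p+\beta+m$ that produces exactly $\ints u^{p+\beta+m}$, and check that coercivity (here ensured by $b>0$) plays the role that the zeroth-order term "$-y$" played in Lemma \ref{l.2.4}.
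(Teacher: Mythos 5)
Your proposal is correct, but the decisive step is handled by a genuinely different device than the one in the paper. The paper does \emph{not} invoke elliptic $W^{2,s}$-regularity at all for $\tau=0$: starting from \eqref{l.3.1.2} it substitutes $\lv=\tfrac{1}{d_2}\big(bv-au^m-f\big)$ pointwise into the cross term $\ints (u+1)^{p+\beta}\lv$, discards the term carrying the favourable sign (using $v\geq 0$), and is left directly with $\tfrac{\chi a p(p-1)}{d_2(p+\beta)}\ints (u+1)^{p+\beta+m}$ plus a term of order $(u+1)^{p+\beta}$ coming from $f\leq K$; both are then absorbed into the dissipation $\ints\uopatn$ by Young's inequality, which is where the hypothesis $\max\{\beta,\beta+m\}<\alpha+\tfrac2n$ (rather than just $\beta+m<\alpha+\tfrac2n$) enters. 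You instead keep $\mlv$, split off $\ints\mlv^{\frac{p+\beta+m}{m}}$ by Young as in \eqref{l.3.1.5}, and control it by the time-independent Agmon--Douglis--Nirenberg estimate for the Neumann problem $-d_2\Delta v+bv=au^m+f$; your exponent matching $ms=p+\beta+m$ is right and the constant is indeed $t$-independent since the operator is fixed, so the step is sound --- it is simply heavier machinery than necessary, bought in exchange for keeping the argument structurally parallel to the $\tau=1$ case. The endgame also differs cosmetically: the paper adds $\ints\uop$ to both sides to obtain a linear differential inequality and applies Lemma \ref{l.2.1} with exponent $1$, whereas you derive a superlinear inequality $\dt y+c''y^{q}\leq C$ with $q=\frac{p+\alpha+2/n}{p}>1$ via H\"older; both are covered by Lemma \ref{l.2.1}. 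One small remark: since the paper assumes $m>0$, one has $\max\{\beta,\beta+m\}=\beta+m$, so the two hypotheses coincide anyway and your route loses nothing in generality.
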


\begin{proof}
From \eqref{l.3.1.2} and then use the second equation in \eqref{1.1} with $\tau=0$, we have
\begin{align}
	\dt \ints (u+1)^p +\ints (u+1)^p\leq &-\frac{4d_1p(p-1)}{(p+\alpha)^2}\ints \Big|\nabla \uofpat\Big|^2\nonumber\\
	&-\frac{\chi p(p-1)}{p+\beta}\ints (u+1)^{p+\beta}\lv+(1+rp)\ints (u+1)^p\nonumber\\
	&-rp\ints u^\kappa \uopo.\nonumber\\
	\leq & -\frac{4d_1p(p-1)}{(p+\alpha)^2}\ints \Big|\nabla \uofpat\Big|^2\nonumber\\
	&+\frac{\chi ap(p-1)}{d_2(p+\beta)}\ints (u+1)^{p+\beta+m}\nonumber\\
	&+\frac{\chi ap(p-1)K}{d_2(p+\beta)}\ints (u+1)^{p+\beta}+(1+rp)\ints (u+1)^p\nonumber\\
	&-rp\ints u^\kappa \uopo.\label{l.3.2.2}
\end{align} 
Recall \eqref{l.3.1.4}, \eqref{l.3.1.6} and \eqref{l.3.1.7}, we obtain
\begin{align}
	\dt \ints (u+1)^p +\ints (u+1)^p\leq & -\frac{4d_1p(p-1)}{C_1M^\frac{2}{n}(p+\alpha)^2} \ints \uopatn\nonumber\\
	&+\frac{\chi ap(p-1)}{d_2(p+\beta)}\ints (u+1)^{p+\beta+m}\nonumber\\
	&+\frac{\chi ap(p-1)K}{d_2(p+\beta)}\ints (u+1)^{p+\beta}+C_2.\label{l.3.2.3}
\end{align} 
where $C_2>0$ depending only on $M, \alpha, r, \kappa$ and $p$. Now using Young's inequality with $\beta + m < \alpha + \frac{2}{n}$, we get
\begin{align}
\frac{\chi ap(p-1)}{d_2(p+\beta)}\ints (u+1)^{p+\beta+m}\leq \frac{2d_1p(p-1)}{C_1M^\frac{2}{n}(p+\alpha)^2}\ints \uopatn+C_3. \label{l.3.2.4}
\end{align}
and again Young's inequality with $\beta < \alpha + \frac{2}{n}$, gives
\begin{align}
\frac{\chi ap(p-1)K}{d_2(p+\beta)}\ints (u+1)^{p+\beta}\leq \frac{2d_1p(p-1)}{C_1M^\frac{2}{n}(p+\alpha)^2}\ints \uopatn+C_4, \label{l.3.2.5}
\end{align}
where $C_3, C_4>0$ depending only on $d_2, \chi, a, \alpha, \beta, m, \kappa, p$ and $K$. Substituting \eqref{l.3.2.4} and \eqref{l.3.2.5} into \eqref{l.3.2.3}, we arrive at
\begin{align*}
	\dt \ints (u+1)^p +\ints (u+1)^p\leq &C_5,
\end{align*} 
where $C_5>0$. Applying Lemma \ref{l.2.1}, the proof is complete.
\end{proof}

{\bf Proof of Theorem \ref{t.1.1}.} 
For $\tau=1$,  the result follows from the standard parabolic regularity (Ladyzhenskaya et al. \cite{ladyzen}, Amann \cite{amann1995}) applied to the second equation in \eqref{1.1},
\begin{align*}
	v_t-d_2\Delta v+bv=a u^m+f(x,t)
\end{align*}
which ensures the boundedness and regularity of $v$, provided that $u$ satisfies the $\lps$-bound obtained in Lemma \ref{l.3.1}. Hence, there exists a constant $C_1>0$ such that
\begin{align}
	\nvt_{\mathcal{W}^{1,\infty}(\Omega)}\leq C_1, \qquad \forall t\in(0, \tmax).\label{t.1.1.1}
\end{align}
For the case $\tau=0$, \ref{t.1.1.1} comes from the standard elliptic regularity theory. Finally, one can employ the well-known Moser-Alikakos iteration technique (\cite{tao2012} Lemma A.1) with Lemma \ref{l.3.1} and \ref{l.3.2} to prove that there exists $C_2>0$ fulflling
\begin{align*}
	\nut_{\lis}\leq C_2, \qquad \forall t\in(0, \tmax).
\end{align*}
This completes the proof.

\section{Global asymptotic stability}
\quad This section examines the long-term dynamics of solutions to system \eqref{1.1} through Lyapunov functional techniques. Our asymptotic analysis extends the methodology developed in \cite{xbai}. In this section, we assume $f_c$ is a constant supply and $m=1$.
\begin{lemma}\label{l.4.1}
	Let $(u, v)$ be the nonnegative classical solution of the system \eqref{1.1} and suppose that the assumptions of Theorem \ref{t.1.1} hold true. Then there exists $\theta\in(0, 1)$ and $C_1>0$ such that
	\begin{align*}
		\big\|u\big\|_{\mathcal{C}^{2+\theta, 1+ \frac{\theta}{2}}(\overline{\Omega}\times[t, t+1])}+\big\|v\big\|_{\mathcal{C}^{2+\theta, 1+ \frac{\theta}{2}}(\overline{\Omega}\times[t, t+1])}\leq C_1
	\end{align*}
	for all $t\geq 1$.
\end{lemma}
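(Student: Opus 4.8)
The plan is to run a standard parabolic bootstrap on the two equations of \eqref{1.1}, exploiting the global-in-time $L^\infty$ bounds already furnished by Theorem \ref{t.1}. Since that theorem guarantees $\tmax=\infty$ together with $\sup_{t>0}\big(\nut_{\lis}+\nvt_{\wsin}\big)\le C$, I would work on time slabs $[t-1,t+1]$ (for $t\ge 1$) of fixed unit length and treat the problem by translation in time. The point of the slab formulation is that every constant produced by the regularity estimates depends only on the time-independent $L^\infty$ bounds for $u$ and $\nabla v$ and on the domain, and is therefore uniform in $t$; the hypothesis $t\ge 1$ merely supplies a warm-up interval, so that no regularity assumption on the data at the left endpoint is needed.

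First I would establish Hölder continuity of $u$. Writing the first equation in divergence form, $u_t=\nabla\cdot\big(d_1D(u)\nabla u+\chi S(u)\nabla v\big)+ru(1-u^{\kappa-1})-\mu uv$, the boundedness of $u$ makes $D(u)$ and $S(u)$ bounded and non-degenerate on the relevant range, while $\nabla v$ is bounded by Theorem \ref{t.1} and the zero-order terms are bounded. Viewing this as a uniformly parabolic divergence-structure equation with bounded measurable coefficients and bounded right-hand side, the De Giorgi--Nash--Moser / Ladyzhenskaya--Solonnikov--Uraltseva interior-and-boundary theory for the Neumann problem yields some $\theta_0\in(0,1)$ with $u\in\mathcal{C}^{\theta_0,\theta_0/2}$ on each slab, uniformly in $t$. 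For the chemical I would then treat, when $\tau=1$, the second equation $v_t=d_2\Delta v+(au^m-bv+f)$: with bounded right-hand side, maximal $L^p$ regularity (Lemma \ref{l.2.4}) and the embedding $W^{2,1}_p\hookrightarrow\mathcal{C}^{1+\theta,(1+\theta)/2}$ for large $p$ give $\nabla v\in\mathcal{C}^{\theta,\theta/2}$, and once $u^m$ is known to be Hölder, parabolic Schauder upgrades this to $v\in\mathcal{C}^{2+\theta,1+\theta/2}$, so that in particular $\Delta v\in\mathcal{C}^{\theta,\theta/2}$. For $\tau=0$ the same conclusion follows at each fixed time from elliptic $L^p$ and Schauder theory applied to $d_2\Delta v-bv=-(au^m+f)$, with time-regularity inherited from that of $u$.

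With $u\in\mathcal{C}^{\theta_0}$ the coefficients $D(u)$ and $S(u)$ become Hölder and $\nabla v\in\mathcal{C}^{\theta}$, so the gradient estimate for divergence-form parabolic equations with Hölder coefficients yields $\nabla u\in\mathcal{C}^{\theta',\theta'/2}$, i.e. $u\in\mathcal{C}^{1+\theta'}$. Rewriting the first equation in non-divergence form, $u_t=d_1D(u)\Delta u+d_1D'(u)|\nabla u|^2+\chi S'(u)\nabla u\cdot\nabla v+\chi S(u)\Delta v+ru(1-u^{\kappa-1})-\mu uv$, all coefficients and the entire right-hand side are now Hölder continuous: the term $|\nabla u|^2$ is controlled by the gradient bound just obtained and $\Delta v$ by the previous step. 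Parabolic Schauder estimates then give $u\in\mathcal{C}^{2+\theta,1+\theta/2}$ on each slab, uniformly in $t$, and combining with the bound on $v$ closes the claim.

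The main obstacle is the quasilinear character of the $u$-equation: the term $D'(u)|\nabla u|^2$ is quadratic in $\nabla u$, so one cannot pass directly from the $L^\infty$ bound to the second-order Schauder estimate. The remedy is the intermediate gradient-Hölder step, which must be extracted from the divergence structure before the equation may be handled as a linear Schauder problem; equally delicate is verifying that every constant is genuinely independent of $t$, which is precisely what the slab formulation together with the uniform-in-time bounds of Theorem \ref{t.1} is designed to guarantee. I also note that the $\mathcal{C}^{2+\theta}$ conclusion uses $f\in\mathcal{C}^{\theta,\theta/2}$, the source regularity already consistent with the classical solvability of Lemma \ref{l.2.2}; if $f$ were merely bounded one would still obtain $\nabla v\in\mathcal{C}^{\theta}$, but the Hölder continuity of $\Delta v$ needed in the last step genuinely requires Hölder $f$.
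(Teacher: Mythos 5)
Your proposal is correct and takes essentially the same approach as the paper, whose proof is a one-line citation to the standard parabolic regularity theory of Ladyzhenskaya--Solonnikov--Uraltseva together with Theorem \ref{t.1} --- precisely the De Giorgi--Nash--Moser/Schauder bootstrap on unit time slabs that you spell out in detail. Your closing remark, that the $\mathcal{C}^{2+\theta,1+\frac{\theta}{2}}$ conclusion tacitly requires H\"older regularity of $f$ beyond the stated bound $0\le f\le K$ in \eqref{1.5}, is a fair point that the paper leaves implicit.
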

\begin{proof}
	The proof is based on the standard parabolic regularity theory in \cite{ladyzen} and Theorem \ref{t.1.1}. For further details, see \cite{tli, mmporzio}
\end{proof}

\quad Lemma \ref{l.4.1} shows that the solutions $u$ and $v$ are continuous and bounded in $\overline{\Omega} \times [t, t+1]$, with existing first derivatives $\nabla u$ and $\nabla v$ that are H\"older continuous. Furthermore, the uniform boundedness of the H\"older norms implies that the gradients are uniformly bounded in $L^\infty(\Omega)$. Consequently, we deduce that
\begin{align}
	\nru_{\wsin}, \nv_{\wsin} \leq C_2,\label{l.4.1.1}
\end{align}
where $C_2$ is a constant independent of $t$.

\begin{lemma}[\cite{mhirata}]\label{l.4.2}
	Suppose that $\mathcal{F}:(1, \infty)$ is a uniformly continuous nonnegative function such that 
	\begin{align*}
		\int_{1}^\infty \mathcal{F}(t)\: \mathrm{d}t<\infty.
	\end{align*} 
	Then, $\mathcal{F}(t)\to 0$ as $t\to\infty$.
\end{lemma}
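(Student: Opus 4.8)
The plan is to argue by contradiction using the uniform continuity to convert a single large value of $f$ into a whole interval on which $f$ stays bounded below, and then to violate integrability by accumulating infinitely many such intervals. Concretely, I would suppose that $f(t)\not\to 0$ as $t\to\infty$. Negating the definition of the limit produces an $\varepsilon>0$ and a sequence $t_n\to\infty$ (which I may take strictly increasing) such that $f(t_n)\geq\varepsilon$ for every $n$.

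Next I would exploit uniform continuity. Since $f$ is uniformly continuous on $(1,\infty)$, there is a $\delta>0$, depending only on $\varepsilon$, with $|f(t)-f(s)|<\varepsilon/2$ whenever $|t-s|<\delta$. Combining this with $f(t_n)\geq\varepsilon$ shows that $f(t)>\varepsilon/2$ for every $t$ in the interval $I_n:=[t_n,\,t_n+\delta]$. Because each $I_n$ has the fixed length $\delta$ and $f$ is nonnegative, the contribution of $I_n$ to the total integral is at least
\begin{align*}
\int_{I_n} f(t)\,\mathrm{d}t \geq \frac{\varepsilon\delta}{2}.
\end{align*}

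I would then pass to a subsequence to make these intervals pairwise disjoint. Since $t_n\to\infty$, I can inductively select indices $n_1<n_2<\cdots$ so that $t_{n_{k+1}}>t_{n_k}+\delta$; this guarantees that the intervals $I_{n_k}$ are mutually disjoint and contained in $(1,\infty)$. Summing the lower bounds over these disjoint intervals gives
\begin{align*}
\int_{1}^\infty f(t)\,\mathrm{d}t \geq \sum_{k=1}^\infty \int_{I_{n_k}} f(t)\,\mathrm{d}t \geq \sum_{k=1}^\infty \frac{\varepsilon\delta}{2}=+\infty,
\end{align*}
which contradicts the hypothesis that $\int_1^\infty f\,\mathrm{d}t<\infty$. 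Hence the assumption $f(t)\not\to0$ fails, proving $f(t)\to0$ as $t\to\infty$.

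This is a standard Barbalat-type argument, so I do not anticipate a genuine analytical obstacle; the only point requiring mild care is the inductive extraction of a separated subsequence so that the intervals $I_{n_k}$ do not overlap, which is exactly what secures the divergence of the sum. The uniform continuity hypothesis is essential and is used precisely once, to propagate the pointwise lower bound $f(t_n)\geq\varepsilon$ to the full interval $I_n$; without it a single large spike could carry no integral mass, and the conclusion would be false.
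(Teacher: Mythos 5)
Your proof is correct; this is the standard contradiction argument for Barbalat's lemma, and the only point needing care (extracting a separated subsequence so the intervals $I_{n_k}$ are disjoint) is handled properly. The paper gives no proof of this lemma --- it is simply cited from the reference --- so there is nothing to compare against; your argument of propagating the pointwise bound $f(t_n)\geq\varepsilon$ to an interval of fixed length $\delta$ via uniform continuity and then contradicting integrability is exactly the expected one.
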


First we start with the coexistent state of the species.

\subsection{Coexistence state}
Here we assume that $\mu f_c<br$ hold.
Let $(u, v)$ be the classical solution of \eqref{1.1} satisfying \eqref{1.2} and $(\ust, \vst)$ be the coexistence steady-state of the system \eqref{1.1}.

\begin{lemma}\label{l.4.3}
	Suppose the assumptions of Theorem 1.1 hold with $\tau \in \{0, 1\}$. Let the functions $D$, $S$ satisfy \eqref{1.3}--\eqref{1.4} and $\kappa\geq 2$. If
	\begin{gather*}
		\chi^2 < \frac{4d_1d_2\mu}{a u^*} \quad \text{and} \quad 2\beta \leq \alpha,
	\end{gather*}
	then the solution has the following asymptotic behavior
	\begin{align}
		\big\|\ut - u^*\big\|_{L^\infty(\Omega)} + \big\|\vt - v^*\big\|_{L^\infty(\Omega)} \to 0  \label{l.4.3.1}
	\end{align}
	as $t \to \infty$.
\end{lemma}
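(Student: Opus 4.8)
The plan is to construct a Lyapunov functional adapted to the coexistence steady state $(\ust,\vst)$ and to show that its time derivative is controlled by a nonnegative dissipation term which forces the solution to converge. A natural candidate, motivated by the logistic and chemotaxis structure, is
\begin{align*}
	E(t)=\ints\left(u-\ust-\ust\ln\frac{u}{\ust}\right)+\frac{\lambda}{2}\ints\left(v-\vst\right)^2,
\end{align*}
for a constant $\lambda>0$ to be chosen later. The entropy-type first term is nonnegative by the elementary inequality $s-\ust-\ust\ln(s/\ust)\ge 0$, and vanishes exactly at $s=\ust$; the quadratic second term penalizes deviation of $v$ from $\vst$. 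First I would differentiate $E$ along trajectories of \eqref{1.1}, using the equations for $u_t$ and $v_t$ and integrating by parts with the homogeneous Neumann conditions to kill the boundary terms.

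The key step is to organize $\frac{\mathrm d}{\mathrm dt}E(t)$ into a good diffusion part and a pointwise quadratic form in $(u-\ust)$ and $(v-\vst)$. Differentiating the first term produces a gradient dissipation $-d_1\ints \frac{\ust D(u)}{u^2}\mgu^2$ together with a cross term from the chemotaxis flux $-\chi\ust\ints \frac{S(u)}{u^2}\gu\cdot\gv$, plus reaction contributions coming from $ru(1-u^{\kappa-1})-\mu uv$. Differentiating the second term and invoking the $v$-equation (with $\lambda$ absorbing the factor from $d_2$ and $b$) yields $-\lambda d_2\ints\mgv^2$, a production coupling $\lambda a\ints u^m(v-\vst)$ and a decay term $-\lambda b\ints(v-\vst)^2$. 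Using the steady-state relations $r\ust(1-(\ust)^{\kappa-1})=\mu\ust\vst$ and $a\ust=b\vst-f$, the reaction terms can be rewritten as a quadratic form centered at the equilibrium. The condition $\kappa\ge 2$ is what guarantees the logistic contribution $-r\ints\frac{\ust}{u}\left(u^\kappa-\ust^{\kappa-1}u\right)$ has the correct sign (strict dissipativity of the growth term near $\ust$), and the structural hypothesis $2\beta\le\alpha$ together with \eqref{1.3} is what lets me bound the chemotaxis cross term by the two gradient dissipations: writing $\frac{S(u)}{u}\le (u+1)^\beta$ and $D(u)\ge (u+1)^\alpha$, the factor $(u+1)^{2\beta-\alpha}\le 1$ keeps the coefficient of the mixed $\gu\cdot\gv$ term absorbable.

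After these reductions, I would complete the square in the gradient variables and separately in the reaction variables. The gradient part is a $2\times2$ quadratic form in $(\gu,\gv)$ whose nonnegativity (after choosing $\lambda$ appropriately, essentially $\lambda\sim \ust/v^*$ scaled by the diffusion constants) is equivalent to the determinant condition $\chi^2<\frac{4d_1d_2\mu}{a\ust}$; this is precisely where the smallness of $\chi$ enters. The reaction quadratic form in $(u-\ust,v-\vst)$ is handled by the logistic monotonicity and the coupling sign, yielding constants $\delta_1,\delta_2>0$ with
\begin{align*}
	\frac{\mathrm d}{\mathrm dt}E(t)\le -\delta_1\ints(u-\ust)^2-\delta_2\ints(v-\vst)^2\le 0.
\end{align*}
Integrating in time gives $\int_1^\infty\!\left(\|\ut-\ust\|_{L^2}^2+\|\vt-\vst\|_{L^2}^2\right)\mathrm dt<\infty$, and combined with the uniform Hölder/$W^{1,\infty}$ bounds from Lemma \ref{l.4.1} and \eqref{l.4.1.1}, Lemma \ref{l.4.2} forces $\|\ut-\ust\|_{L^2}+\|\vt-\vst\|_{L^2}\to 0$. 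Finally, the $L^2$ convergence is upgraded to the $L^\infty$ convergence in \eqref{l.4.3.1} via a Gagliardo–Nirenberg interpolation against the uniform $W^{1,\infty}$ bound.

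The main obstacle I anticipate is the simultaneous sign control of the gradient quadratic form and the reaction quadratic form with a single multiplier $\lambda$: the chemotaxis cross term couples the two, so $\lambda$ must be tuned to make both the diffusion determinant condition (giving $\chi^2<\frac{4d_1d_2\mu}{a\ust}$) and the reaction dissipativity hold at once. Ensuring the nonlinear diffusion and production exponents $\alpha,\beta,m$ do not spoil this — which is exactly the role of the hypothesis $2\beta\le\alpha$ in taming the $u$-dependent coefficients $\frac{S(u)}{u^2}$ and $\frac{D(u)}{u^2}$ uniformly — will require the pointwise bounds \eqref{1.3} applied carefully so that all $u$-dependent prefactors collapse to manageable constants.
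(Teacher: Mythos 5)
Your proposal follows essentially the same route as the paper: the same entropy-plus-quadratic Lyapunov functional (your free weight $\lambda$ is fixed in the paper as $\mu/a$, which makes the cross reaction terms $\pm a\mu\int_\Omega(u-\ust)(v-\vst)$ cancel exactly), the same completion of the square in the gradient variables yielding the condition $\chi^2<\frac{4d_1d_2\mu}{a\ust}$, the same use of $2\beta\le\alpha$ with \eqref{1.3} to reduce $\frac{S^2(u)}{D(u)u^2}$ to a constant, and the same finish via integration in time, Lemma \ref{l.4.2}, and Gagliardo--Nirenberg interpolation against \eqref{l.4.1.1}. The argument is correct as outlined (noting that the production coupling is handled for $m=1$, consistent with the equilibrium computation).
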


\begin{proof}
To analyze the coexistence state, we now introduce the following energy functional
\begin{align*}
	\mathcal{E}_1(t):=a\ints\left(u-\ust-\ust \ln\frac{u}{\ust}\right)
	+\tau\frac{\mu}{2}\ints\left(v-\vst\right)^2	=\mathcal{I}_{11}(t)+\mathcal{I}_{12}(t), \quad t>0.
\end{align*}
It is straightforward to verify that $\mathcal{E}_1(t)\geq 0$. Now compute
\begin{align}
	\dt \mathcal{I}_{11}
	\leq & -d_1\ust\ints D(u) \frac{\mgu^2}{u^2} - \chi\ust \ints S(u) \frac{\gu\cdot\gv}{u^2}\nonumber\\
	& +\ints (u-\ust)(r-ru^{\kappa-1}-\mu v-(r-ru^{*\kappa-1}-\mu \vst)) \nonumber\\
	= & -d_1\ust\ints D(u) \frac{\mgu^2}{u^2}-2\ust\ints \frac{\sqrt{d_1D(u)}\:\: \gu}{u}\:\:\frac{\chi S(u)\gv}{2u\sqrt{d_1D(u)}} -\frac{\chi^2\ust}{4d_1}\ints\frac{S^2(u)\mgv^2}{D(u)u^2}\nonumber\\
	&+\frac{\chi^2\ust}{4d_1}\ints\frac{S^2(u)\mgv^2}{D(u)u^2} +\ints (u-\ust)(-ru^{\kappa-1}-\mu v+ru^{*\kappa-1}+\mu \vst)\nonumber\\
	= & -\ust\ints\left(\frac{\sqrt{d_1D(u)}\:\: \gu}{u}+\frac{\chi S(u)\gv}{2u\sqrt{d_1D(u)}} \right)^2+\frac{\chi^2\ust}{4d_1}\ints\frac{S^2(u)\mgv^2}{D(u) u^2}\nonumber\\
	&-r\ints (u-\ust)(u^{\kappa-1}-u^{*\kappa-1})-\mu\ints (u-\ust)(v-\vst).\label{l.4.3.2}
\end{align}
Consider the second term in the above estimate with use of \eqref{1.3}, we arrive at
\begin{align*}
\frac{\chi^2\ust}{4d_1}\ints\frac{S^2(u)\mgv^2}{D(u) u^2}\leq \frac{\chi^2\ust}{4d_1}\ints\frac{u^2(u+1)^{2\beta}\mgv^2}{(u+1)^{\alpha} \:u^2}\leq \frac{\chi^2\ust}{4d_1}\ints (u+1)^{2\beta-\alpha}\mgv^2.
\end{align*}
Since $2\beta\leq \alpha$, we have
\begin{align}
\frac{\chi^2\ust}{4d_1}\ints\frac{S^2(u)\mgv^2}{D(u) u^2}\leq& \frac{\chi^2\ust}{4d_1}\ints \mgv^2.\label{l.4.3.3}
\end{align}
Substitute \eqref{l.4.3.3} into \eqref{l.4.3.2} and use $(u-\ust)(u^{\kappa-1}-u^{*\kappa-1})\geq (u-\ust)^2$ due to $\kappa\geq 2$, we get
\begin{align}
	\dt \mathcal{I}_{11}	\leq & \frac{\chi^2\ust}{4d_1}\ints \mgv^2-r\ints (u-\ust)^2-\mu\ints (u-\ust)(v-\vst).\label{l.4.3.4}
\end{align}
Similarly, for $\tau=1$
\begin{align}
\dt \mathcal{I}_{12}
=&\ints (v-\vst)(d_2 \lv+a(u-\ust)-b(v-\vst))\nonumber\\
=&-d_2\ints \mgv^2+a\ints(u-\ust)(v-\vst)-b\ints(v-\vst)^2.\label{l.4.3.5}
\end{align}
Adding \eqref{l.4.3.4} and \eqref{l.4.3.5}, we get
\begin{align*}
	\dt \mathcal{E}_1(t)	\leq & \frac{a\chi^2\ust}{4d_1}\ints \mgv^2-ar\ints (u-\ust)^2-a\mu\ints (u-\ust)(v-\vst)-d_2\mu\ints \mgv^2\\
	&+a\mu\ints(u-\ust)(v-\vst)-b\mu\ints(v-\vst)^2\\
	\leq& -\left(d_2\mu-\frac{a\chi^2\ust}{4d_1}\right)\ints \mgv^2-ar\ints (u-\ust)^2-b\mu\ints(v-\vst)^2.
\end{align*}
Simplifying, 
\begin{align}
	\dt \mathcal{E}_1(t)	\leq& -ar\ints (u-\ust)^2-b\mu\ints(v-\vst)^2\leq -\zeta_1 f_1(t),\label{l.4.3.6}
\end{align}
where $\zeta_1>0$ and
\begin{align*}
	f_1(t)=\ints (u-\ust)^2+\ints(v-\vst)^2.
\end{align*}
Upon integrating with respect to $t$, we obtain
\begin{align*}
	\int_1^\infty f_1(t)\:\mbox{d}t \leq \frac{1}{\zeta_1}\Big(\mathcal{E}_1(1)-\mathcal{E}_1(t)\Big)<\infty.
\end{align*}
Since $f_1(t)$ is uniformly continuous in $(1, \infty)$, we use Lemma \ref{l.4.2}, which gives
\begin{align*}
	\ints(\ut-\ust)^2+\ints(\vt-\vst)^2\to 0
\end{align*}
as $t\to\infty$. Applying the Gagliardo-Nirenberg inequality, we have
\begin{align}
	\Big\|\ut-\ust\Big\|_{\lis}\leq C_1\Big\|\ut-\ust\Big\|^{\frac{n}{n+2}}_{{ \mathcal{W}^{1,\infty}}(\Omega)}\:\:\Big\|\ut-\ust\Big\|^{\frac{2}{n+2}}_{\lts}, \quad t>0.\label{l.4.3.7}
\end{align}
From \eqref{l.4.1.1}, we can deduce that $\ut$ converges to $\ust$ in $\lis$ when $t\to\infty$. By using a similar argument, we can derive \eqref{l.4.3.1}. For the case $\tau=0$, we have
\begin{align}
0=&-d_2\mu\ints \mgv^2+a\mu\ints (u-\ust)(v-\vst)-b\mu\ints (v-\vst)^2.\label{l.4.3.8}
\end{align}
Adding \eqref{l.4.3.4} and \eqref{l.4.3.8}, we get same as \eqref{l.4.3.6}. By a similar arguments, we complete the proof.
\end{proof}

\subsection{\textit{E. coli} vanishing state}
Here we assume that $\mu f_c\geq br$ hold.
Let $(u, v)$ be the classical solution of \eqref{1.1} satisfying \eqref{1.2} and $(\ubt, \vbt)$ be the \textit{E. coli} vanishing state of the system \eqref{1.1}.

\begin{lemma}\label{l.4.4}
	Assume the hypotheses of Theorem 1.1 hold with $\tau \in \{0,1\}$. Let the functions $D$, $S$ satisfy \eqref{1.3}--\eqref{1.4} and $\kappa=2$. Then the solution exhibits the following asymptotic behavior
	\begin{align}
\big\|\ut\big\|_{\lis}+\big\|\vt-\vbt\big\|_{\lis}\to 0\label{l.4.4.1}
	\end{align}
	as $t \to \infty$.
\end{lemma}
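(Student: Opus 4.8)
The plan is to mirror the Lyapunov-functional argument of Lemma \ref{l.4.3}, but now centered at the washout steady state $(\ubt, \vbt) = (0, f/b)$. Because the species component vanishes, $\ust = 0$, the relative-entropy integrand $u - \ust - \ust\ln(u/\ust)$ collapses to simply $u$, which suggests the functional
\begin{align*}
	\mathcal{E}_2(t) := a\ints u + \tau\frac{\mu}{2}\ints(v - \vbt)^2,
\end{align*}
manifestly nonnegative. The decisive structural gain over the coexistence case is that differentiating $a\ints u$ amounts to integrating the first equation of \eqref{1.1} directly: the diffusion and chemotaxis divergence terms integrate to zero under the homogeneous Neumann condition. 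Consequently no sign restriction on $\chi$, $\alpha$, $\beta$ is needed, which explains their absence from Theorem \ref{t.3}.

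Next I would compute the two contributions. For the $u$-part, using $\kappa = 2$ to write $ru(1-u^{\kappa-1}) = ru - ru^2$ and splitting $v = (v-\vbt)+\vbt$ with $\vbt = f/b$, the divergence terms drop and I obtain
\begin{align*}
	\dt\,a\ints u = a(r - \mu\vbt)\ints u - ar\ints u^2 - a\mu\ints u(v - \vbt).
\end{align*}
Here the hypothesis $f\mu \geq br$ enters crucially: it forces $r - \mu\vbt = r - \mu f/b \leq 0$, so the linear growth term is nonpositive and may be discarded. For the $v$-part with $\tau = 1$, I would test the second equation by $\mu(v-\vbt)$, use $-bv + f = -b(v-\vbt)$ together with $m=1$, and integrate by parts to get $-d_2\mu\ints\mgv^2 + a\mu\ints u(v-\vbt) - b\mu\ints(v-\vbt)^2$. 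Adding the two contributions, the cross terms $\mp a\mu\ints u(v-\vbt)$ cancel exactly, leaving
\begin{align*}
	\dt\mathcal{E}_2(t) \leq -ar\ints u^2 - b\mu\ints(v - \vbt)^2 \leq -\zeta_2 f_2(t),
\end{align*}
with $\zeta_2 = \min\{ar, b\mu\} > 0$ and $f_2(t) = \ints u^2 + \ints(v-\vbt)^2$. For $\tau = 0$ the same dissipation is produced by testing the elliptic identity $0 = d_2\lv + au - b(v-\vbt)$ by $\mu(v-\vbt)$ and substituting the resulting relation for $a\mu\ints u(v-\vbt)$ in place of the time-derivative computation.

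The final steps then follow Lemma \ref{l.4.3} essentially verbatim: integrating in time and invoking $\mathcal{E}_2 \geq 0$ yields $\int_1^\infty f_2(t)\,\mathrm{d}t < \infty$; the uniform bounds \eqref{l.4.1.1} render $f_2$ uniformly continuous on $(1,\infty)$, so Lemma \ref{l.4.2} gives $\ints u^2 \to 0$ and $\ints(v-\vbt)^2 \to 0$ as $t\to\infty$. A Gagliardo-Nirenberg interpolation of $\|u\|_{\lis}$ and $\|v-\vbt\|_{\lis}$ against their $\lts$ norms, controlled by the uniform $\wsin$ bound \eqref{l.4.1.1}, then upgrades these $\lts$ decays to the claimed $\lis$ convergence \eqref{l.4.4.1}. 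I expect the only genuinely delicate point to be the first one: recognizing that the degeneration $\ust = 0$ simultaneously removes the chemotaxis cross term from the dissipation and transfers the entire task of absorbing the logistic source onto the single inequality $f\mu \geq br$; once the functional is chosen accordingly, the remainder is routine.
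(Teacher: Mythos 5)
Your proposal is correct and follows essentially the same route as the paper: the identical functional $\mathcal{E}_2(t)=a\ints u+\tau\frac{\mu}{2}\ints(v-\vbt)^2$, the same cancellation of the cross terms $\pm a\mu\ints u(v-\vbt)$, the same use of $f\mu\geq br$ to discard $-a(\mu\vbt-r)\ints u$, and the same finishing steps via Lemma \ref{l.4.2} and Gagliardo--Nirenberg interpolation against the uniform bound \eqref{l.4.1.1}. No substantive differences.
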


\begin{proof}
To analyze the \textit{E. coli} vanishing state, we now introduce the following energy functional
\begin{align*}
	\mathcal{E}_2(t):=&a\ints u
	+\tau\frac{\mu}{2}\ints\left(v-\vbt\right)^2=\mathcal{I}_{21}(t)+\mathcal{I}_{22}(t), \quad t>0.
\end{align*}
It is easy to see that $\mathcal{E}_2(t)\geq 0$. Compute
\begin{align}
\dt \mathcal{I}_{21}(t)=&\ints u(r-ru-\mu v)+\mu\ints u\vbt-\mu \ints u\vbt\nonumber\\
=&-\mu\vbt\ints u+r\ints u-r\ints u^2-\mu \ints u(v-\vbt)\nonumber\\
=&-(\mu\vbt-r)\ints u-r\ints u^2-\mu \ints u(v-\vbt)\label{l.4.4.2}
\end{align}
and for $\tau=1$,
\begin{align}
	\dt \mathcal{I}_{22}=&-d_2\ints \mgv^2+a\ints u(v-\vbt)-b\ints(v-\vbt)^2.\label{l.4.4.3}
\end{align}
Adding \eqref{l.4.4.2} and \eqref{l.4.4.3}, we get
\begin{align*}
	\dt \mathcal{E}_{2}(t)=&-a(\mu\vbt-r)\ints u-ar\ints u^2-a\mu \ints u(v-\vbt)-d_2\mu\ints \mgv^2+a\mu\ints u(v-\vbt)\nonumber\\
	&-b\mu\ints(v-\vbt)^2.
\end{align*}
Therefore,
\begin{align}
	\dt \mathcal{E}_{2}(t)\leq &-ar\ints u^2-b\mu\ints(v-\vbt)^2-a(\mu\vbt-r)\ints u\nonumber\\
	\leq& -\zeta_2 f_2(t)-a(\mu\vbt-r)\ints u,\label{l.4.4.4}
\end{align}
where $\zeta_2>0$ and
\begin{align*}
	f_2(t)=\ints u^2+\ints(v-\vbt)^2.
\end{align*}
By a similar arguments as in Lemma \ref{l.4.3}, we derive \eqref{l.4.4.1}. For the case $\tau=0$, we have
\begin{align*}
	0=&-d_2\mu\ints \mgv^2+a\mu\ints u(v-\vbt)-b\mu\ints (v-\vbt)^2.
\end{align*}
By the similar arguments, we complete the proof.
\end{proof}

The following proofs of the main theorems are inspired by \cite{xbai}.

\subsection{Proof of the main theorems}

{\bf Proof of Theorem \ref{t.2}.}
Let $\mathcal{H}(u)=u-\ust \ln  u$, for $u>0$. By applying L'Hôpital's rule, we obtain
\begin{align*}
	\lim\limits_{u-\ust}\frac{\mathcal{H}(u)-\mathcal{H}(\ust)}{(u-\ust)^2}=\frac{1-\frac{\ust}{u}}{2(u-\ust)}=\frac{1}{2\ust}.
\end{align*}
We can therefore select $t_0 > 0$ and using the Taylor's expansion, such that
\begin{align*}
	\ints\left(u-\ust-\ust \ln\frac{u}{\ust}\right)=\ints(\mathcal{H}(u)-\mathcal{H}(\ust))
	\leq\frac{1}{2\ust}\ints(u-\ust)^2, \quad \forall\, t>t_0.
\end{align*}
From the above estimate , we arrive at
\begin{align}
	\frac{1}{4\ust}\ints(u-\ust)^2\leq \ints\left(u-\ust-\ust \ln\frac{u}{\ust}\right)\leq \frac{3}{4\ust}\ints(u-\ust)^2, \quad \forall\, t>t_0.\label{t.1.2.1}
\end{align}
Taking into account the inequality \eqref{t.1.2.1} it is clear that for all $t>t_0$ and with $\delta_1, \delta_2>0$, we have
\begin{align}
	\delta_1f_1(t)\leq \mathcal{E}_1(t)\leq \delta_2 f_1(t).\label{t.1.2.2}
\end{align}
From \eqref{l.4.3.6} and right-hand side of \eqref{t.1.2.2}, we get
\begin{align*}
	\dt\mathcal{E}_1(t)\leq- \zeta_1 f_1(t)\leq -\frac{\zeta_1}{\delta_2}\mathcal{E}_1(t), \qquad t>t_0.
\end{align*}
Solving the above differential inequality, gives
\begin{align*}
	\mathcal{E}_1(t)\leq C_1e^{-\frac{\zeta_1}{\delta_2}t}, \qquad t>t_0.
\end{align*}
where $C_1>0$. Now, left-hand side of \eqref{t.1.2.2} gives
\begin{align*}
	f_1(t)\leq \frac{1}{\delta_1} \mathscr{E}_1(t)\leq \frac{C_1}{\delta_1} e^{-\frac{\zeta_1}{\delta_2}t}, \qquad t>t_0.
\end{align*}
In view of \eqref{l.4.3.7} and then Lemma \ref{l.4.1}, finally we arrive at
\begin{align*}
	\big\|\ut-\ust\big\|_{\lis}+\big\|\vt-\vst\big\|_{\lis}\leq C_2e^{-\frac{\zeta_1}{\delta_2}t},
\end{align*}
for all $t>\tin$, with $C_2>0$. This completes the proof.\hfill \qedsymbol\\

{\bf Proof of Theorem \ref{t.3}.}
Choose $t_0>0$, we can apply a similar approach to the one used to derive \eqref{t.1.2.1} to obtain the following result
\begin{align}
	\frac{1}{2}\ints u^2\leq \ints u\leq 2\ints u^2+2\ints u, \quad \forall\, t>t_0.\label{t.1.3.1}
\end{align}
Considering the above inequality \eqref{t.1.3.1}, it is apparent that 
\begin{align}
	\delta_3f_2(t)\leq\mathcal{E}_2(t)\leq \delta_4\left(f_2(t)+\ints u\right), \quad t>\tin\label{t.1.3.2}
\end{align}
with $\delta_3, \delta_4>0$. From \eqref{l.4.4.4} and right-hand side of \eqref{t.1.3.2} with $ \zeta_2<\frac{a}{b}(f\mu-br)$, we obtain
\begin{align*}
	\dt\mathcal{E}_2(t)	\leq& -\frac{\zeta_2}{\delta_4}\mathcal{E}_2(t)-\Big(a(\mu\vbt-r)-\zeta_2\Big)\ints u\\
	\leq& -\frac{\zeta_2}{\delta_4}\mathcal{E}_2(t), \qquad t>t_0,
\end{align*}
such that $\mathcal{E}_2(t)\leq C_3e^{-\frac{\zeta_2}{\delta_4}t}$, $t>t_0$, where $C_3>0$. Now, by examining the left-hand side of \eqref{t.1.3.2}, we observe that
\begin{align*}
	f_2(t)\leq \frac{1}{\delta_3} \mathcal{E}_2(t)\leq \frac{C_3}{\delta_3}e^{-\frac{\zeta_2}{\delta_4}t}.
\end{align*}
By using a similar argument as in the proof of Theorem \ref{t.2}, we arrive at
\begin{align*}
	\big\|\ut\big\|_{\lis}+\big\|\vt-\vbt\big\|_{\lis}\leq C_4e^{-\frac{\zeta_2}{\delta_4}t},
\end{align*}
for some $t>\tin$, with $C_4>0$. This completes the proof.\hfill\qedsymbol\\

\section*{Conclusion}
In this work, we established the global existence of classical solutions to a chemotaxis system with lethal interactions, nonlinear diffusion and nonlinear productions in a bounded domain $\Omega \subset \mathbb{R}^n (n\geq 1)$ under appropriate conditions on the parameters $\alpha, \beta$ and $m$. Furthermore, we investigate the asymptotic behavior of solutions under specific parameter regimes through the construction and analysis of suitable Lyapunov functionals. Our analysis reveals two key stability outcomes with significant biological interpretations
\begin{enumerate}
	\item  If the interaction rate $\mu<\frac{br}{f_c}$, $2\beta\leq \alpha$ and $\chi^2$ is sufficiently small, the system admits a unique positive equilibrium state that is globally asymptotically stable. As a result, the \textit{E. coli} and $H_2O_2$ coexist in the long run for $m=1$ and $\kappa\geq 2$.
	
	\item When the interaction rate $\mu\geq \frac{br}{f_c}$, the semi-trivial equilibrium state becomes globally asymptotically stable, leading to the eventual extinction of the  \textit{E. coli} density  for $m=1$ and $\kappa=2$.
\end{enumerate}

\section*{Acknowledgments}
The authors wish to thank the anonymous referee for her/his careful reading of the original manuscript and their comments that eventually led to an improved presentation. 
GS and JS thank the Anusandhan National Research Foundation (ANRF), formerly Science and Engineering Research Board (SERB), Govt. of India for their support through Core Research Grant $(CRG/2023/001483)$ during this work.

\end{document}